\newif\ifarxiv
\newif\ifecp
\newif\ifims
\newif\ifincludeleftovers
\numberwithin{equation}{section}
\theoremstyle{plain}
\newtheorem{theorem}{Theorem}[section]
\newtheorem{proposition}[theorem]{Proposition}
\newtheorem{lemma}[theorem]{Lemma}
\theoremstyle{definition}
\newtheorem{definition}[theorem]{Definition}
\newtheorem{remark}[theorem]{Remark}
\newtheorem{example}[theorem]{Example}
\newtheorem*{remark*}{Remark}
\theoremstyle{plain}
\newtheorem{theorem}{Theorem}[section]
\newtheorem{proposition}[theorem]{Proposition}
\newtheorem{lemma}[theorem]{Lemma}
\theoremstyle{definition}
\newtheorem{example}[theorem]{Example}
\newtheorem*{remark*}{Remark}
\newcommand{\sgnorm}[1]{\norm{#1}_{\psi_2}}
\newcommand{\onorm}[1]{\norm{#1}_{\psi}}
\newcommand{\ex}{\operatorname{ex}}
\Leskela\footnote{Aalto University, Finland.
    \EMAIL{lasse.leskela@aalto.fi}}\orcid{0000-0001-8411-8329}
\le \sqrt{\log 2} \cdot \|X\|_{\psi_2}\),
\begin{document}

\ifims
\begin{frontmatter}
\fi

\ifarxiv
\title{Sharp constants relating the sub-Gaussian norm and the sub-Gaussian parameter}
\fi

\ifims
\title{Sharp constants relating the sub-Gaussian norm and the sub-Gaussian parameter}
\fi

\ifims
\runtitle{Sharp constants relating the sub-Gaussian norm and the sub-Gaussian parameter}
\begin{aug}
\author[A]{\inits{L.}\fnms{Lasse}~\snm{\Leskela}\ead[label=e1]{lasse.leskela@aalto.fi}\orcid{0000-0001-8411-8329}} 
\author[B]{\inits{M.}\fnms{Matvei}~\snm{Zhukov}\ead[label=e2]{matvei.zhukov@aalto.fi}}
\address[A]{Department of Mathematics and Systems Analysis, Aalto University, Espoo, Finland\printead[presep={,\ }]{e1}}
\address[B]{Department of Mathematics and Systems Analysis, Aalto University, Espoo, Finland\printead[presep={,\ }]{e2}}
\end{aug}
\fi

\ifarxiv
\date{\today}
\author{Lasse Leskelä \and Matvei Zhukov}
\maketitle
\fi

\ifarxiv
\begin{abstract}
We determine the optimal constants in the classical inequalities relating the sub-Gaussian norm \(\|X\|_{\psi_2}\) and the sub-Gaussian parameter \(\sigma_X\) for centered real-valued random variables. We show that \(\sqrt{3/8} \cdot \|X\|_{\psi_2} \le \sigma_X \le \sqrt{\log 2} \cdot \|X\|_{\psi_2}\), and that both bounds are sharp, attained by the standard Gaussian and Rademacher distributions, respectively.
\end{abstract}
\fi

\ifims
\begin{abstract}
We determine the optimal constants in the classical inequalities relating the sub-Gaussian norm \(\|X\|_{\psi_2}\) and the sub-Gaussian parameter \(\sigma_X\) for centered real-valued random variables. We show that \(\sqrt{3/8} \cdot \|X\|_{\psi_2} \le \sigma_X \le \sqrt{\log 2} \cdot \|X\|_{\psi_2}\), and that both bounds are sharp, attained by the standard Gaussian and Rademacher distributions, respectively.
\end{abstract}
\fi

\ifims
\begin{keyword}
\kwd{optimal variance proxy}
\kwd{normal distribution}
\kwd{symmetric Bernoulli distribution}
\kwd{sub-Gaussian concentration}
\kwd{Orlicz norm}
\kwd{Luxemburg norm}
\end{keyword}
\fi

\ifarxiv
\noindent
\textbf{Keywords:}
optimal variance proxy,
normal distribution,
symmetric Bernoulli distribution,
sub-Gaussian concentration,
Orlicz norm,
Luxemburg norm
\fi

\ifims
\end{frontmatter}
\fi

\section{Introduction}

A real-valued random variable \( X \) is said to be sub-Gaussian if its tails decay at least as fast as those of a Gaussian random variable. Two standard ways to quantify such behaviour are via the \emph{sub-Gaussian norm}
\[
 \|X\|_{\psi_2} \ = \ \inf\left\{ K > 0 \colon \E e^{X^2 / K^2} \le 2 \right\},
\]
and the \emph{sub-Gaussian parameter}
\[
 \sigma_X \ = \ \inf\left\{ \sigma \ge 0 \colon
 \E e^{s (X - \E X)} \le e^{\sigma^2 s^2 / 2} \ \text{for all } s \in \R \right\}.
\]

It is well known that \(\|X\|_{\psi_2} < \infty\) if and only if \(\E|X| < \infty\) and \(\sigma_X < \infty\), and these equivalent conditions are both commonly used as definitions of a sub-Gaussian random variable \cite{Boucheron_Lugosi_Massart_2013,Vershynin_2018,Wainwright_2019}.
Moreover, there exist universal constants \( c_1, c_2 > 0 \) such that for every centered random variable \( X \),
\[
  c_1 \, \|X\|_{\psi_2} \, \le \, \sigma_X \, \le \, c_2 \, \|X\|_{\psi_2}.
\]
Precise knowledge of these universal constants is important in high-dimensional probability and statistical applications where sharp tail bounds are crucial. However, their exact values do not appear to be explicitly known in the literature. The main goal of this note is to identify them precisely.

\begin{theorem}
\label{the:Main}
For any centered random variable \( X \),
\[
 \sqrt{3/8} \cdot \|X\|_{\psi_2} \le \sigma_X \le \sqrt{\log 2} \cdot \|X\|_{\psi_2},
\]
and both inequalities are sharp.
\end{theorem}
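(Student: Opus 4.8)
The plan is to prove the two inequalities separately, reducing each (after a homogeneity rescaling) to a bound on a single exponential moment, and then to verify sharpness by explicit computation. I will use throughout that both $\|X\|_{\psi_2}$ and $\sigma_X$ are attained by their defining infima (the map $K\mapsto\E e^{X^2/K^2}$ is continuous and non-increasing, and $\{\sigma\ge 0:\E e^{sX}\le e^{\sigma^2s^2/2}\text{ for all }s\}$ is closed), so the defining inequalities hold with $K=\|X\|_{\psi_2}$ and $\sigma=\sigma_X$, and I may rescale freely.

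\textbf{Lower bound.} To prove $\sigma_X\ge\sqrt{3/8}\,\|X\|_{\psi_2}$, rescale so that $\sigma_X=1$; since $X$ is centered this reads $\E e^{sX}\le e^{s^2/2}$ for all $s\in\R$, and it suffices to show $\E e^{3X^2/8}\le 2$, which gives $\|X\|_{\psi_2}\le\sqrt{8/3}$. The device is Gaussian randomization: if $G\sim N(0,1)$ is independent of $X$, then $\E_G e^{\sqrt{3/4}\,xG}=e^{3x^2/8}$ for every $x\in\R$, so by Tonelli's theorem and the sub-Gaussian bound applied conditionally on $G$,
\[
 \E e^{3X^2/8}
 \ = \ \E_G\!\left[\E_X e^{(\sqrt{3/4}\,G)X}\right]
 \ \le \ \E_G e^{(\sqrt{3/4}\,G)^2/2}
 \ = \ \E_G e^{3G^2/8}
 \ = \ \bigl(1-\tfrac34\bigr)^{-1/2}
 \ = \ 2 .
\]
For $X\sim N(0,1)$ every step is an equality, so the constant $\sqrt{3/8}$ is optimal and attained by the standard Gaussian.

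\textbf{Upper bound.} To prove $\sigma_X\le\sqrt{\log 2}\,\|X\|_{\psi_2}$, rescale so that $\|X\|_{\psi_2}=1$, i.e.\ $\E e^{X^2}\le 2$; the assertion becomes $\E e^{sX}\le 2^{s^2/2}$ for all $s\in\R$. The endpoint $s\to 0$ is just the variance bound $\E X^2\le\log 2$, immediate from Jensen's inequality $e^{\E X^2}\le\E e^{X^2}\le 2$, whose equality case is precisely a Rademacher variable rescaled by $\sqrt{\log 2}$; this already flags where sharpness will come from. For the full range of $s$ I would use the dual/tangent-majorant method: for fixed $s$ it suffices to exhibit $a,b\in\R$ and $c\ge 0$ with
\[
 e^{sx}\ \le\ a+bx+c\,e^{x^2}\quad\text{for all }x\in\R,
 \qquad\text{and}\qquad a+2c\ \le\ 2^{s^2/2},
\]
because then $\E e^{sX}\le a+b\,\E X+c\,\E e^{X^2}\le a+2c\le 2^{s^2/2}$, the linear term being free since $\E X=0$. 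By convex duality this is the same as reducing the extremal problem $\sup\{\E e^{sX}:\E X=0,\ \E e^{X^2}\le 2\}$ to laws supported on at most three points with $\E e^{X^2}=2$, i.e.\ to an essentially two-parameter optimization.

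The main obstacle is that the optimal majorant $(a,b,c)$—equivalently the extremal two-point law—is not given by an elementary closed form, so the inequality $a+2c\le 2^{s^2/2}$ is not a one-line check. I expect this is handled by splitting the range of $s$: for $|s|$ large the crude Young estimate $sx\le \tfrac{x^2}{2c}+\tfrac{c s^2}{2}$ with $c=\tfrac12$ already gives $\E e^{sX}\le 2\,e^{s^2/4}\le 2^{s^2/2}$, so the delicate work is confined to a bounded interval of $s$, on which one either tracks the two-point optimizer and exploits convexity/monotonicity in $s$, or constructs an explicit admissible $(a,b,c)$. Sharpness of $\sqrt{\log 2}$ is then immediate from the Rademacher variable $\varepsilon$: $\sigma_\varepsilon=1$ because $\cosh s\le e^{s^2/2}$ with matching second-order behaviour at $s=0$, while $\E e^{\varepsilon^2/K^2}=e^{1/K^2}=2$ forces $\|\varepsilon\|_{\psi_2}=1/\sqrt{\log 2}$, whence $\sigma_\varepsilon/\|\varepsilon\|_{\psi_2}=\sqrt{\log 2}$.
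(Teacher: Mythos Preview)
Your lower bound is complete and essentially identical to the paper's: the Gaussian-randomization identity $\E_G e^{tG}=e^{t^2/2}$ plus Fubini, with $t=\sqrt{3/4}\,X$, is exactly the argument in Section~\ref{sec:LowerBound}, and you correctly identify the standard Gaussian as the equality case.

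For the upper bound, your framework is right but the proof is not finished. You correctly set up the extremal problem $\sup\{\E e^{sX}:\E X=0,\ \E e^{X^2}\le 2\}$ and its dual (the tangent majorant $e^{sx}\le a+bx+ce^{x^2}$), and you note that the extremals are finitely supported. Your Young-inequality trick for $|s|\gtrsim 2.7$ is valid (and is not in the paper). But the ``delicate work'' on the bounded interval of $s$---which you leave as ``track the two-point optimizer'' or ``construct an explicit $(a,b,c)$''---is precisely the content of the theorem, and you have not done it. In particular you have not argued why the extremal distribution is two-point rather than three-point, nor verified the inequality there.

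The paper fills this gap in two nontrivial steps. First, Winkler's extreme-point theorem gives that the maximizer of $\mu\mapsto M_\mu(s)$ over $\mathcal H$ is supported on at most three points; the reduction from three to two points (Lemma~\ref{the:NoMaximumOnH3}) is a genuine argument: the Lagrange conditions force a function $u(t)=e^{st}+\lambda_0+\lambda_1 t+\lambda_2 e^{t^2}$ to have three double zeros, hence (by Rolle) $u''$ has at least four zeros, which is shown to be impossible. Second, on two-point laws the paper invokes the exact formula $\sigma_\mu^2=x_1^2\,\tfrac{u^2-1}{2\log u}$ of Buldygin--Moskvichova and proves by an explicit (and somewhat delicate) monotonicity computation that the constraint $\E e^{X^2}\le 2$ forces $\sigma_\mu\le\sqrt{\log 2}$ (Proposition~\ref{the:UpperBoundBinary}). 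Neither step is routine, and neither is covered by ``convexity/monotonicity in $s$'' in any obvious way. Your sharpness verification via the Rademacher variable is correct and matches the paper.
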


Sharpness is demonstrated by the standard Gaussian and Rademacher distributions, as shown below.

\begin{example}
Let \( X \sim \mathcal{N}(0,1) \). Then \( \sigma_X = 1 \) and \( \|X\|_{\psi_2} = \sqrt{8/3} \), showing that the lower bound in Theorem~\ref{the:Main} is tight.
\end{example}

\begin{example}
Let \( X \in \{-1,1\} \) with \( \pr(X = \pm 1) = 1/2 \). Then \( \sigma_X = 1 \) and \( \|X\|_{\psi_2} = 1 / \sqrt{\log 2} \), showing that the upper bound in Theorem~\ref{the:Main} is tight.
\end{example}

The optimal upper bound \( \sigma_X / \|X\|_{\psi_2} \le \sqrt{\log 2} \approx 0.83 \)
in Theorem~\ref{the:Main} improves upon the best previously known estimate,
to our knowledge, namely \( (3/2)^{1/2} (\log 2)^{1/4} \approx 1.12 \) reported in \cite{Li_2024+}.
Earlier, weaker upper bounds include \( \sqrt{3} \approx 1.73 \) \cite{Rivasplata_2012,VanHandel_2016}, \( 2 \) \cite{Boucheron_Lugosi_Massart_2013,Zhang_Chen_2021}, \( \sqrt{2e} \approx 2.33 \) \cite{Chafai_etal_2012}, and \( \sqrt{8} \approx 2.83 \) \cite{Pollard_2024}.

The optimal lower bound \( \sigma_X / \|X\|_{\psi_2} \ge \sqrt{3/8} \approx 0.61 \) appears implicitly in earlier works \cite{Buldygin_Kozachenko_2000, Li_2024+, Wainwright_2019}, where it can be derived by evaluating specific constants in general sub-Gaussian inequalities. However, to our knowledge, its sharpness has not been previously identified. As demonstrated above, this bound is attained by the standard Gaussian distribution. Other explicit, albeit weaker, lower bounds reported in the literature include \( \sqrt{1/6} \approx 0.41 \) \cite{Pollard_2024, Rivasplata_2012, VanHandel_2016}, \( \sqrt{1/8} \approx 0.35 \) \cite{Boucheron_Lugosi_Massart_2013}, and \( \sqrt{(\log 2)/8} \approx 0.29 \) \cite{Zhang_Chen_2021}.

The remainder of the paper is organised as follows. Section~\ref{sec:UpperBound} contains the main argument, providing a detailed proof of the upper bound in Theorem~\ref{the:Main}. For completeness, Section~\ref{sec:LowerBound} includes a short proof of the lower bound, although it can be deduced from earlier works \cite{Buldygin_Kozachenko_2000, Li_2024+, Wainwright_2019}.

\section{Proof of upper bound}
\label{sec:UpperBound}

In this section we prove the upper bound of Theorem~\ref{the:Main}.
The core of the proof is Section~\ref{sec:ReductionBinary},
where we show that when maximising the moment generating function
among centered random variables with $\sgnorm{X} \le 1$,
it suffices to restrict to binary random variables.
In Section~\ref{sec:UpperBoundForBinaryDistributions} we prove the
upper bound for binary random variables.
Finally, Section~\ref{sec:ConclusionUB} concludes the proof.

\subsection{Reduction to binary distributions}
\label{sec:ReductionBinary}

In this section we prove that for any real number $s$,
when maximising the moment generating function $\E e^{sX}$ with
respect to centered random variables with
sub-Gaussian norm bounded by $\sgnorm{X} \le 1$,
we may restrict to binary random variables.


In the proof, it is more convenient to operate with probability measures
instead of random variables.
Denote by $\cP$ the set of probability measures on the Borel sigma-algebra of $\R$.
This is the set of laws of all real-valued random variables.
We denote by
\begin{equation}
 \label{eq:MomentSet}
 \cH
 \weq \left\{ \mu \in \cP \colon \int x \, \mu(dx) = 0, \ \int e^{x^2} \mu(dx) \le 2 \right\}.
\end{equation}
the laws of centered random variables $X$ with $\sgnorm{X} \le 1$,
and by
\begin{equation}
\label{eq:MomentSetFinite}
 \cH_m
 \weq \bigg\{ \mu \in \cH \colon \mu = \sum_{i=1}^m p_i \de_{x_i}, \
 \text{$x_1,\dots,x_m$ distinct}, \ 
 p_1,\dots,p_m > 0
 \bigg\}.
\end{equation}
the probability measures in $\cH$ having support of size $m$.

\begin{proposition}
\label{the:ReductionToBinary}
For any real number $s \in \R$, there exists a probability measure
$\mu_* \in \cH_2$ such that
\[
 M_{\mu_*}(s)
 \weq
 \sup_{\mu \in \cH} M_\mu(s).
\]
\end{proposition}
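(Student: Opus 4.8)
The plan is to cast this as an infinite-dimensional linear programming / moment problem and invoke a Bauer-type maximum principle: the functional $\mu \mapsto M_\mu(s) = \int e^{sx}\,\mu(dx)$ is affine (indeed linear) in $\mu$, and $\cH$ is a convex set cut out by two moment constraints — one equality $\int x\,\mu(dx)=0$ and one inequality $\int e^{x^2}\,\mu(dx)\le 2$ — intersected with the simplex of probability measures. The extreme points of such a set, by the classical theory of moment spaces (à la Richter--Rogosinski--Mulholland, or Karr/Winkler on generalized moment problems), are supported on at most a number of points governed by the number of constraints. With one probability-normalization constraint, one linear equality constraint, and one active inequality constraint, the extreme points should be supported on at most three points, and in fact — because we also get to choose $s$ and because the objective $e^{sx}$ interacts with the constraint functions — one expects the supremum to be attained at a two-point (binary) measure. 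So the overall strategy is: (i) show the supremum over $\cH$ is attained; (ii) show it is attained at an extreme point of $\cH$; (iii) characterize extreme points as finitely supported with small support; (iv) rule out the three-point case, reducing to $\cH_2$.

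First I would address existence of a maximizer. One has to be slightly careful since $\cH$ is not weakly compact in the usual topology — the constraint $\int e^{x^2}\,\mu(dx)\le 2$ gives uniform integrability of $e^{sx}$ for the relevant range, and more importantly it controls mass escaping to infinity, so a tightness argument should give that a maximizing sequence has a weakly convergent subsequence whose limit lies in $\cH$ (using Fatou/lower semicontinuity for the $e^{x^2}$ functional and convergence for $e^{sx}$, which is dominated thanks to the $\psi_2$ bound). Then for step (ii), given any maximizer $\mu$, if $\mu$ is not extreme write $\mu = \tfrac12(\mu_1+\mu_2)$ with $\mu_1,\mu_2\in\cH$ distinct; affinity of $M_\cdot(s)$ forces both $\mu_1,\mu_2$ to be maximizers too, and one iterates / applies Choquet or a direct argument to land on an extreme maximizer. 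Steps (iii)–(iv) are the technical heart: I would use the standard moment-problem fact that an extreme point of $\{\mu : \int g_i\,d\mu = c_i,\ i=1,\dots,k\}$ in $\cP$ has $\#\supp\mu \le k+1$ (here with the inequality constraint either inactive — giving $k=1$ and a Dirac, impossible for a centered nondegenerate measure unless it's $\delta_0$ which is handled trivially — or active, giving $k=2$ and support size $\le 3$). To collapse a putative three-point extreme maximizer $p_1\delta_{x_1}+p_2\delta_{x_2}+p_3\delta_{x_3}$ to two points, I would perturb: look for signed measures $\nu$ supported on $\{x_1,x_2,x_3\}$ with $\nu(\R)=0$, $\int x\,d\nu=0$, and $\int e^{x^2}\,d\nu \le 0$ (so that $\mu+t\nu$ stays feasible for small $t>0$); a dimension count on the space of such $\nu$ shows a nonzero one exists, and by affinity the objective is constant along $\mu+t\nu$, so we can push $t$ until some atom vanishes, landing in $\cH_2$ (or $\cH_1$, trivial) without decreasing $M_\mu(s)$. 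One subtlety is handling the boundary sign of the $e^{x^2}$ perturbation and the possibility that the inequality constraint is slack, but both cases only make the reduction easier.

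The main obstacle I anticipate is not the linear-programming skeleton — which is standard — but making the compactness/existence argument clean in the presence of the non-compact constraint set, and dealing with degenerate edge cases ($\mu = \delta_0$, atoms colliding, the inequality constraint being inactive so that naively $k=1$). A cleaner route that sidesteps Choquet theory entirely, and which I would probably actually write up, is purely constructive: start from an arbitrary $\mu\in\cH$ (we may assume it has finite support after a separate approximation/truncation step, or work directly), and repeatedly apply the three-to-two-point collapsing perturbation above, using the affinity of $\mu \mapsto M_\mu(s)$ to guarantee the objective never decreases, until the support has size $\le 2$. The remaining gap is then showing $\sup_{\mu\in\cH_{\text{fin}}} M_\mu(s) = \sup_{\mu\in\cH} M_\mu(s)$, i.e. that finitely supported measures suffice, which follows from approximating a general $\mu\in\cH$ by discretizations that respect (or approximately respect) the two constraints and converge in the objective — a routine but slightly fiddly limiting argument, and the place where I'd expect to spend the most care. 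Whichever route, the punchline is that $\cH_2$ contains a measure achieving the supremum, which is exactly Proposition~\ref{the:ReductionToBinary}.
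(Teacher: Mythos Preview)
Your overall strategy---existence of a maximizer, reduction to extreme points, extreme points supported on at most three atoms, then ruling out three atoms---matches the paper's, and steps (i)--(iii) are essentially correct. The gap is in step (iv). At a three-point \emph{extreme} maximizer $\mu = \sum_{i=1}^3 p_i\delta_{x_i}$, the constraint $\int e^{x^2}\,d\mu = 2$ is necessarily active (otherwise the two-sided weight perturbation you describe already exhibits $\mu$ as a proper convex combination, contradicting extremality). The one-dimensional space of signed measures $\nu$ on $\{x_1,x_2,x_3\}$ with $\nu(\R)=0$ and $\int x\,d\nu=0$ is spanned by some $\nu_0$ with $\int e^{x^2}\,d\nu_0 \ne 0$, since $(1,1,1)$, $(x_i)$, $(e^{x_i^2})$ are linearly independent for distinct $x_i$ (Lemma~\ref{the:LinIndGradients}). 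Taking the sign so that $\int e^{x^2}\,d\nu_0 < 0$, only $t \ge 0$ is feasible, and maximality of $\mu$ gives merely $\int e^{sx}\,d\nu_0 \le 0$. The map $t \mapsto M_{\mu+t\nu_0}(s)$ is affine, hence linear in $t$, not constant; it may be strictly decreasing on $[0,t^*]$, so pushing $t$ until an atom vanishes can strictly lower the objective. Your phrase ``by affinity the objective is constant along $\mu+t\nu$'' conflates affinity with constancy---it is precisely at extreme points that no two-sided perturbation is available to force equality. You flag the active-constraint case as one that ``only make[s] the reduction easier''; in fact it is the obstruction.

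The paper closes this gap by also perturbing the \emph{locations} $x_i$, working in the full six-dimensional parameter space $(p,x)$. The Lagrange multiplier conditions then force a function $u(t) = e^{st} + \lambda_0 + \lambda_1 t + \lambda_2 e^{t^2}$ to satisfy $u(x_i)=u'(x_i)=0$ at all three $x_i$; Rolle's theorem gives $u''$ at least four zeros, while a short convexity computation shows $u''$ has at most two. This contradiction rules out any maximizer in $\cH_3$. The location-perturbation together with this zero-counting argument is the missing ingredient in your proposal.
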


The proof of Proposition~\ref{the:ReductionToBinary} proceeds by showing that for every \( s \):
\begin{enumerate}
\item The map \( \mu \mapsto M_\mu(s) \) attains a maximum on \( \mathcal{H} \) (Section~\ref{sec:MaximumIsAttained}).
\item This maximum satisfies \( \max_{\mu \in \mathcal{H}} M_\mu(s) = \max_{\mu \in \mathcal{H}_2 \cup \mathcal{H}_3} M_\mu(s) \) (Section~\ref{sec:ReductionToFinitelySupportedDistributions}).
\item The map \( \mu \mapsto M_\mu(s) \) does not attain a maximum on \( \mathcal{H}_3 \) (Section~\ref{sec:ReductionToBinaryDistributions}).
\end{enumerate}
We then conclude the proof at the end of Section~\ref{sec:ReductionToBinaryDistributions}.

\subsubsection{Maximum is attained}
\label{sec:MaximumIsAttained}

In this section we prove that for any $s \in \R$,
the moment generating function $\E e^{sX}$ attains
a maximum over the set of centered random variables with
sub-Gaussian norm bounded by $\sgnorm{X} \le 1$.
The laws of such random variables are presented by the set $\cH$ defined
in \eqref{eq:MomentSet}.

\begin{proposition}
\label{the:MaximumIsAttained}
For any $s \in \R$, the functional $\mu \mapsto M_\mu(s)$
attains a maximum on $\cH$.
\end{proposition}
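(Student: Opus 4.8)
The plan is to realise $\cH$ as a weakly compact set of probability measures on which $\mu \mapsto M_\mu(s)$ is weakly continuous, and then extract a maximiser from a maximising sequence. The single constraint $\int e^{x^2}\,\mu(dx) \le 2$ will do all the work: it simultaneously forces tightness, uniform integrability of $x$, and uniform integrability of $e^{sx}$.

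First I would record that $L := \sup_{\mu \in \cH} M_\mu(s)$ is finite. By the elementary inequality $2sx \le x^2 + s^2$ (i.e.\ $(x-s)^2 \ge 0$) and the defining bound in \eqref{eq:MomentSet}, every $\mu \in \cH$ satisfies $\int e^{2sx}\,\mu(dx) \le e^{s^2}\int e^{x^2}\,\mu(dx) \le 2e^{s^2}$, whence $M_\mu(s) \le \sqrt{2}\,e^{s^2/2}$ by Cauchy--Schwarz. Fix a maximising sequence $(\mu_n) \subset \cH$ with $M_{\mu_n}(s) \to L$. Tightness of $\cH$ follows from Markov's inequality:
\[
 \mu\big(\{x : |x| > R\}\big) \ \le \ e^{-R^2}\int e^{x^2}\,\mu(dx) \ \le \ 2e^{-R^2}
 \qquad \text{for all } \mu \in \cH, \ R > 0,
\]
so by Prokhorov's theorem some subsequence $\mu_{n_k}$ converges weakly to a probability measure $\mu_*$.

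Next I would verify $\mu_* \in \cH$ and $M_{\mu_*}(s) = L$. Since $x \mapsto e^{x^2}$ is nonnegative and continuous, hence lower semicontinuous, the Portmanteau theorem gives $\int e^{x^2}\,\mu_*(dx) \le \liminf_k \int e^{x^2}\,\mu_{n_k}(dx) \le 2$. For the centering constraint, the bound $\int x^2\,\mu(dx) \le \int e^{x^2}\,\mu(dx) \le 2$, valid on all of $\cH$, shows via de la Vallée-Poussin that $\{x\}$ is uniformly integrable along $(\mu_{n_k})$; combined with weak convergence (e.g.\ through the Skorokhod representation and the Vitali convergence theorem) this yields $\int x\,\mu_*(dx) = \lim_k \int x\,\mu_{n_k}(dx) = 0$, so $\mu_* \in \cH$. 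The identical argument applied to the objective, using the uniform bound $\int e^{2sx}\,\mu(dx) \le 2e^{s^2}$ on $\cH$ to get uniform integrability of $e^{sx}$ along $(\mu_{n_k})$, gives $M_{\mu_*}(s) = \lim_k M_{\mu_{n_k}}(s) = L$. Hence the supremum is attained at $\mu_*$.

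The only real obstacle is that the three relevant functions $x$, $e^{x^2}$, and $e^{sx}$ are all unbounded, so weak convergence by itself controls none of their integrals; the resolution is exactly the moment constraint, which yields $L^2$-type bounds (hence uniform integrability) for $x$ and $e^{sx}$, while lower semicontinuity takes care of the one remaining one-sided inequality for $e^{x^2}$. Everything else is routine.
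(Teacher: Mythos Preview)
Your proof is correct and follows essentially the same route as the paper: tightness of $\cH$ from the moment constraint, Prokhorov to extract a weak limit, Fatou/lower semicontinuity for the $e^{x^2}$ constraint, and uniform integrability (driven by that same constraint) to pass to the limit in $\int x\,\mu(dx)$ and $\int e^{sx}\,\mu(dx)$. The only cosmetic difference is packaging: the paper isolates weak compactness of $\cH$ and weak continuity of $\mu\mapsto M_\mu(s)$ as separate lemmas and then invokes ``continuous on compact'', whereas you run the maximising-sequence argument directly; your $L^2$-style uniform integrability bounds ($\int e^{2sx}\,\mu(dx)\le 2e^{s^2}$, $\int x^2\,\mu(dx)\le 2$) are a pleasant alternative to the paper's explicit tail estimates.
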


The proof of Proposition~\ref{the:MaximumIsAttained} is based on continuity and compactness with respect to convergence in distribution.
We first derive tails bounds (Lemma~\ref{the:UI}) to prove 
continuity (Lemma~\ref{the:Continuity}) and compactness (Lemma~\ref{the:CompactMomentSet}),
and then finish the proof of Proposition~\ref{the:MaximumIsAttained}.

\begin{lemma}
\label{the:UI}
For any number $s \in \R$ and for any probability measure $\mu$
on $\R$ such that $\int e^{x^2} \mu(dx) \le 2$,
\begin{align}
 \label{eq:ExponentialUI}
 \int e^{\abs{sx}} \, 1(\abs{x} \ge K) \, \mu(dx)
 &\wle 2 e^{-K^2/2}, \qquad K \ge 2\abs{s}, \\
 \label{eq:1stMomentUI}
 \int \abs{x} \, 1(\abs{x} \ge K) \, \mu(dx)
 &\wle 2 e^{-K^2/2}, \qquad K \ge 2.
\end{align}
\end{lemma}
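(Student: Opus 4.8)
The plan is to prove both tail estimates by a single pointwise domination argument: on the relevant tail set the integrand is bounded above by $e^{-K^2/2}\,e^{x^2}$, after which the hypothesis $\int e^{x^2}\,\mu(dx) \le 2$ immediately yields the claim. No compactness, truncation, or approximation is needed for this lemma; it is purely an exercise in calibrating elementary scalar inequalities.

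For \eqref{eq:ExponentialUI}, I would start from the elementary inequality $|sx| \le \tfrac12 x^2$, which holds precisely when $|x| \ge 2|s|$ (equivalently $|s| \le |x|/2$). Hence on $\{|x| \ge K\}$ with $K \ge 2|s|$ one has $|sx| - x^2 \le -\tfrac12 x^2 \le -\tfrac12 K^2$, so that $e^{|sx|} \le e^{-K^2/2}\,e^{x^2}$ on that set. Integrating against $\mu$ and using $\int e^{x^2}\,\mu(dx) \le 2$ gives $\int e^{|sx|}\,1(|x|\ge K)\,\mu(dx) \le e^{-K^2/2}\int e^{x^2}\,\mu(dx) \le 2 e^{-K^2/2}$. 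For \eqref{eq:1stMomentUI} the same scheme applies with the inequality $|x| \le \tfrac12 x^2$, valid for $|x| \ge 2$: for $|x| \ge K \ge 2$ one gets $|x| \le \tfrac12 x^2 \le x^2 - \tfrac12 K^2$ (the last step since $K \le |x|$), hence $|x| \le e^{x^2 - K^2/2}$, and integrating as before yields $\int |x|\,1(|x|\ge K)\,\mu(dx) \le 2 e^{-K^2/2}$.

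The only thing requiring any care is the choice of the two calibration inequalities together with their thresholds ($2|s|$ for the first, $2$ for the second), which are exactly what make $e^{x^2}$ dominate the integrand up to the scalar factor $e^{-K^2/2}$; once this is arranged the moment hypothesis does the rest, so I do not anticipate any genuine obstacle here. It may be worth remarking that the thresholds are not optimal but are chosen for simplicity, since the lemma is only used qualitatively (to obtain uniform integrability and tightness) in what follows.
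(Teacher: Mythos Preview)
Your proof is correct and follows essentially the same approach as the paper: a pointwise domination $e^{|sx|} \le e^{-K^2/2}\,e^{x^2}$ on $\{|x|\ge K\}$, followed by the moment hypothesis. The only cosmetic difference is that for \eqref{eq:1stMomentUI} the paper simply notes $|x|\le e^{|x|}$ and invokes \eqref{eq:ExponentialUI} with $s=1$, whereas you redo the pointwise estimate from scratch; both are equivalent one-line arguments.
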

\begin{proof}
Fix $K \ge 2 \abs{s}$.
Observe that $x^2 - \abs{sx} \ge \frac12 x^2$ for $\abs{x} \ge 2\abs{s}$.
Hence,
\[
 e^{\abs{sx}}
 \weq \frac{e^{x^2}}{e^{x^2 - \abs{sx}}}
 \wle \frac{e^{x^2}}{e^{x^2/2}}
 \wle \frac{e^{x^2}}{e^{K^2/2}}
 \qquad \text{for $\abs{x} \ge K$}.
\]
Because $\int e^{x^2} \mu(dx) \le 2$, inequality \eqref{eq:ExponentialUI} follows by noting that
\[
 \int e^{\abs{sx}} \, 1(\abs{x} \ge K) \, \mu(dx)
 \wle e^{-K^2/2} \int e^{x^2} \mu(dx)
 \wle 2 e^{-K^2/2}.
\]
Inequality \eqref{eq:1stMomentUI} follows by
noting that $\abs{x} \le e^{\abs{x}}$ and applying 
\eqref{eq:ExponentialUI} with $s=1$.
\end{proof}

\begin{lemma}
\label{the:Continuity}
For any $s \in \R$, $\mu \mapsto M_\mu(s)$
is a continuous real functional on $\cH$ in the topology of weak convergence.
\end{lemma}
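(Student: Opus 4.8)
The plan is to deduce continuity from the combination of weak convergence with a uniform tail estimate: weak convergence alone does not suffice because $x \mapsto e^{sx}$ is unbounded, but the tail bound \eqref{eq:ExponentialUI} of Lemma~\ref{the:UI} controls the contribution of large $|x|$ uniformly over all of $\cH$, which lets us approximate $e^{sx}$ by a bounded continuous function. First I would record that the functional is finite and real-valued on $\cH$: for $\mu \in \cH$, splitting $\int e^{|sx|}\,\mu(dx)$ at $|x| = 2|s|$ and applying \eqref{eq:ExponentialUI} with $K = 2|s|$ gives $M_\mu(s) \le \int e^{|sx|}\,\mu(dx) \le e^{2s^2} + 2e^{-2s^2} < \infty$. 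The case $s = 0$ is immediate since then $M_\mu(s) = 1$ for every $\mu$, so assume $s \ne 0$.

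Next, fix $\mu, \mu_n \in \cH$ with $\mu_n \to \mu$ weakly, and fix $\varepsilon > 0$. Choose $K \ge 2|s|$ with $2e^{-K^2/2} < \varepsilon$, and set $g(x) = e^{s\,m_K(x)}$ where $m_K(x) = \max(-K,\min(x,K))$ clips $x$ to $[-K,K]$. Then $g$ is bounded and continuous, $g(x) = e^{sx}$ for $|x| \le K$, and $|g(x)| \le e^{|s|K} \le e^{|sx|}$ whenever $|x| \ge K$; hence for every $\nu \in \cH$,
\[
  \Big| M_\nu(s) - \int g\,d\nu \Big|
  \ \le \ \int_{|x| \ge K}\big(e^{|sx|} + e^{|s|K}\big)\,\nu(dx)
  \ \le \ 2\int_{|x|\ge K} e^{|sx|}\,\nu(dx)
  \ \le \ 4e^{-K^2/2}
  \ < \ 2\varepsilon ,
\]
where the penultimate step is \eqref{eq:ExponentialUI}. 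Applying this with $\nu = \mu_n$ and $\nu = \mu$ and using that $\int g\,d\mu_n \to \int g\,d\mu$ by weak convergence (as $g \in C_b(\R)$), the triangle inequality yields $\limsup_n |M_{\mu_n}(s) - M_\mu(s)| \le 4\varepsilon$; letting $\varepsilon \downarrow 0$ gives $M_{\mu_n}(s) \to M_\mu(s)$, which is the claimed continuity.

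The only delicate point is the interchange of limit and integral for the unbounded integrand $e^{sx}$, and this is precisely what the uniform tail control of Lemma~\ref{the:UI} provides; once the problem is reduced to a bounded continuous test function the conclusion is just the definition of weak convergence. A minor bookkeeping matter is choosing the truncation $g$ so that it is genuinely continuous (hence the clipping $m_K$ rather than a sharp indicator cutoff) while still being dominated by $e^{|sx|}$ on the tail, so that the error estimate above holds uniformly over $\cH$.
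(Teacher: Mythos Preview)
Your proof is correct and takes a somewhat different route from the paper's. The paper invokes the Skorohod representation theorem to realise the weakly convergent sequence $\mu_n \to \mu_\infty$ as an almost surely convergent sequence of random variables $X_n \to X_\infty$, and then uses the uniform integrability of $(e^{sX_n})$ furnished by \eqref{eq:ExponentialUI}, together with a standard $L^1$-convergence lemma, to conclude $\E e^{sX_n} \to \E e^{sX_\infty}$. You instead work directly at the level of measures: truncate $e^{sx}$ to a bounded continuous function, bound the truncation error uniformly over $\cH$ via the same tail estimate \eqref{eq:ExponentialUI}, and then invoke only the definition of weak convergence on the bounded test function. Your approach is more elementary in that it sidesteps the Skorohod coupling entirely, at the price of a little explicit bookkeeping; the paper's version is terser but imports a heavier tool. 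Both arguments rest on exactly the same ingredient---the uniform tail control from Lemma~\ref{the:UI}---so the substantive content is the same.
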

\begin{proof}
Assume that $\mu_n \to \mu_\infty$ weakly, for some $\mu_n, \mu_\infty \in \cH$.
By a Skorohod coupling \cite[Theorem 4.30]{Kallenberg_2002}, there exist random variables $X_n, X_\infty$
defined on a common probability space
such that 
$\law(X_n) = \mu_n$,
$\law(X_\infty) = \mu_\infty$,
and
$X_n \to X_\infty$ almost surely.
Then $e^{s X_n} \to e^{s X_\infty}$ almost surely.
The random variables $e^{s X_n}$ are uniformly integrable
due to \eqref{eq:ExponentialUI}.
It follows \cite[Lemma 4.11]{Kallenberg_2002} that
\(
 M_{\mu_n}(s)
 = \E e^{s X_n}
 \to \E e^{s X_\infty}
 = M_{\mu_\infty}(s).
\)
\end{proof}

\begin{lemma}
\label{the:CompactMomentSet}
The set $\cH \subset \cP$ is compact in the topology of weak convergence.
\end{lemma}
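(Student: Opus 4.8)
The plan is to establish compactness of $\cH$ by combining Prokhorov's theorem with a closedness argument, using that the space $\cP$ of Borel probability measures on $\R$, equipped with the weak topology, is metrizable (e.g.\ by the Lévy–Prokhorov metric), so compactness is equivalent to sequential compactness.

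First I would prove tightness. By Markov's inequality, every $\mu \in \cH$ satisfies
\[
 \mu\big(\abs{x} \ge K\big)
 \ \le\ e^{-K^2} \int e^{x^2}\,\mu(dx)
 \ \le\ 2 e^{-K^2},
\]
a bound that is uniform over $\cH$ and vanishes as $K \to \infty$. Hence $\cH$ is tight, and by Prokhorov's theorem \cite{Kallenberg_2002} it is relatively compact in the weak topology: every sequence in $\cH$ has a subsequence converging weakly to some $\mu_\infty \in \cP$.

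Next I would show $\cH$ is closed, i.e.\ that if $\mu_n \to \mu_\infty$ weakly with $\mu_n \in \cH$, then $\mu_\infty \in \cH$. Using a Skorohod coupling \cite[Theorem 4.30]{Kallenberg_2002}, realise $\mu_n = \law(X_n)$ and $\mu_\infty = \law(X_\infty)$ on a common probability space with $X_n \to X_\infty$ almost surely. For the mean constraint, the variables $X_n$ are uniformly integrable by Lemma~\ref{the:UI}, in particular \eqref{eq:1stMomentUI}, so $\E X_\infty = \lim_n \E X_n = 0$. For the exponential-moment constraint, $e^{X_n^2} \to e^{X_\infty^2}$ almost surely and these variables are nonnegative, so Fatou's lemma yields $\E e^{X_\infty^2} \le \liminf_n \E e^{X_n^2} \le 2$. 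Therefore $\mu_\infty \in \cH$. Combining the two steps, $\cH$ is a closed subset of a relatively compact set in a metrizable space, hence compact.

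The one point requiring care is the passage to the limit in the constraint $\int e^{x^2}\,\mu(dx) \le 2$: since $e^{x^2}$ grows faster than every $e^{\abs{sx}}$, Lemma~\ref{the:UI} gives no uniform integrability for the family $e^{X_n^2}$, and one should not expect convergence of these integrals. What saves the argument is that the constraint is one-sided, so Fatou's lemma suffices — and it is precisely the direction of this inequality that makes the limit land back inside $\cH$.
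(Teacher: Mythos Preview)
Your proof is correct and follows essentially the same approach as the paper: Skorohod coupling plus Fatou for the exponential-moment constraint, uniform integrability via Lemma~\ref{the:UI} for the mean constraint, and Prokhorov's theorem for relative compactness. The only cosmetic differences are that you prove tightness before closedness (the paper does the reverse) and establish tightness by a direct Markov bound rather than by invoking that uniform integrability implies tightness.
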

\begin{proof}
Let us first verify that $\cH$ is closed.
Assume that $\mu_n \to \mu$ weakly for some $\mu_n \in \cH$ and $\mu_\infty \in \cP$.
By a Skorohod coupling \cite[Theorem 4.30]{Kallenberg_2002},
there exist random variables $X_n, X_\infty$ defined on a common probability space such that 
$\law(X_n) = \mu_n$,
$\law(X_\infty) = \mu_\infty$,
and
$X_n \to X_\infty$ almost surely.
Then by Fatou's lemma \cite[Lemma 4.11]{Kallenberg_2002},
\begin{equation}
 \label{eq:CompactMomentSet1}
 \E e^{X_\infty^2}
 \weq \E \liminf_{n \to \infty} e^{X_n^2}
 \wle \liminf_{n \to \infty} \E e^{X_n^2}
 \wle 2.
\end{equation}
The random variables $X_n$ are uniformly integrable
due to \eqref{eq:1stMomentUI}.
It follows \cite[Lemma 4.11]{Kallenberg_2002} that $\E \abs{X_\infty} < \infty$ and
\begin{equation}
 \label{eq:CompactMomentSet2}
 \E X_\infty 
 \weq \lim_{n \to \infty} \E X_n
 \weq 0.
\end{equation}
Because $\mu_\infty = \law(X_\infty)$,
we see in the light of \eqref{eq:CompactMomentSet1}--\eqref{eq:CompactMomentSet2}
that $\mu_\infty \in \cH$.  We conclude that $\cH$ is closed.

By \eqref{eq:1stMomentUI} we see that $\cH$ is uniformly integrable,
and hence tight \cite{Leskela_Vihola_2013}.
Prokhorov's theorem 
(see e.g.\ \cite[Theorem 3.2.2]{Ethier_Kurtz_1986} or \cite[Theorem 16.3]{Kallenberg_2002})
then implies that $\cH$ is relatively compact.
Being closed and relatively compact, we conclude that $\cH$ is compact.
\end{proof}

\begin{proof}[Proof of Proposition~\ref{the:MaximumIsAttained}]
We know that $\mu \mapsto M_\mu(s)$ is a continuous (Lemma~\ref{the:Continuity})
functional on the compact (Lemma~\ref{the:CompactMomentSet}) set $\cH$.
Therefore, $\mu \mapsto M_\mu(s)$ attains a maximum on $\cH$.
\end{proof}

\subsubsection{Reduction to finitely supported distributions}
\label{sec:ReductionToFinitelySupportedDistributions}


Because the set of probability measures $\cH$ defined in \eqref{eq:MomentSet}
is convex, we expect the linear functional $\mu \mapsto \int e^{sx} \mu(dx)$
to be maximised in the extremal points of $\cH$.
An element $\mu$ in $\cH$ is called \emph{extremal}
if it cannot be written as a convex combination $\mu = (1-\la) \mu_1 + \la \mu_2$
for some $\mu_1,\mu_2 \in \cH$ and $\la \in (0,1)$.
We denote by $\ex(\cH)$ the extremal elements of $\cH$.
We will apply Winkler's theorem \cite{Winkler_1988} to reduce
our optimisation problem to from general probability measures
to probability measures of finite support. 
Note that both the original and the reduced feasible set are infinite-dimensional.

\begin{lemma}
\label{the:ExtremalElements}
The set of extremal elements of $\cH$ equals
\(
 \ex(\cH)
 = \cH_1 \cup \cH_2 \cup \cH_3.
\)
\end{lemma}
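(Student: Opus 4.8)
The statement to prove is Lemma~\ref{the:ExtremalElements}, namely that the extremal points of the moment set
\[
\cH = \Big\{ \mu \in \cP \colon \textstyle\int x\,\mu(dx) = 0, \ \int e^{x^2}\mu(dx) \le 2 \Big\}
\]
are exactly the probability measures supported on at most three points, i.e.\ $\ex(\cH) = \cH_1 \cup \cH_2 \cup \cH_3$. My plan is to invoke Winkler's characterization of extreme points of moment sets \cite{Winkler_1988}. The set $\cH$ is cut out by one equality constraint ($\int x\,d\mu = 0$) and one inequality constraint ($\int e^{x^2}\,d\mu \le 2$). Winkler's theorem states that if $\cH = \{\mu \colon \mu(f_i) \in A_i, \ i = 1,\dots,k\}$ for continuous functions $f_i$ and intervals $A_i$, then every extreme point of $\cH$ is supported on at most $k+1$ points (with the refinement that the count can be lowered by one for each constraint that is active in the interior of its interval, roughly speaking). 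Here $k = 2$ with $f_1(x) = x$, $f_2(x) = e^{x^2}$, so extreme points are supported on at most $3$ points. This gives the inclusion $\ex(\cH) \subseteq \cH_1 \cup \cH_2 \cup \cH_3$.

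**Key steps.** First I would verify the hypotheses needed to apply Winkler's theorem: the functions $f_1(x) = x$ and $f_2(x) = e^{x^2}$ must lie in the relevant class (continuous, and the moment conditions must define a set on which the integrals are well-defined and finite — which follows from the tail bounds of Lemma~\ref{the:UI}, since $\int e^{x^2}\,d\mu \le 2$ forces $\int |x|\,d\mu < \infty$). One has to be slightly careful about the form of the constraints: the equality $\mu(f_1) = 0$ is a degenerate interval $\{0\}$, while $\mu(f_2) \le 2$ is the interval $[1,2]$ (note $\int e^{x^2}\,d\mu \ge 1$ always). Winkler's framework handles both closed intervals and singletons. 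Applying the theorem yields: any extreme $\mu$ satisfies $|\supp\mu| \le 2 + 1 = 3$. Second, for the reverse inclusion, I would argue that every $\mu \in \cH_1 \cup \cH_2 \cup \cH_3$ is extremal. For $\cH_1$ this is immediate: the only centered point mass is $\delta_0$, and a Dirac measure is always an extreme point of $\cP$. For $\cH_2$ and $\cH_3$: suppose $\mu = (1-\lambda)\mu_0 + \lambda\mu_1$ with $\mu_0,\mu_1 \in \cH$ and $\lambda \in (0,1)$; then $\supp\mu_0, \supp\mu_1 \subseteq \supp\mu$, so both $\mu_0$ and $\mu_1$ are supported on the same $\le 3$ atoms. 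Writing everything in terms of the weights on those atoms, the constraint $\mu_i(f_1) = 0$ is one linear equation and "being a probability measure" is another; a measure on $m \le 3$ given atoms satisfying these two linear equations forms an affine space of dimension $m - 2 \le 1$, and one checks that $\mu$ is not in the relative interior of the feasible segment unless $\mu_0 = \mu_1 = \mu$ — here the key point is that with exactly $m$ distinct prescribed atoms and the two linear constraints, either the solution is unique ($m=2$) or it is a segment whose only interior point satisfying also the $e^{x^2}$-constraint structure coincides with $\mu$ ($m=3$, where one uses that $\int e^{x^2}\,d\mu < 2$ would be needed to perturb, forcing $\mu$ to sit at a vertex).

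**Main obstacle.** The delicate part is the reverse inclusion, specifically pinning down exactly why every three-point measure in $\cH$ is extremal rather than only some of them. The subtlety is the inequality constraint: if $\int e^{x^2}\,d\mu < 2$ strictly, then on three fixed atoms one has a one-parameter family of centered probability measures through $\mu$, all still satisfying the (now slack) $e^{x^2}$-bound for small perturbations — which would make $\mu$ \emph{non}-extremal, contradicting the claim. So the resolution must be that one cannot perturb while keeping the support to exactly those three points and staying centered: the space of signed measures on three fixed atoms with total mass $0$ and first moment $0$ is one-dimensional, spanned by some $\nu$; but $\mu + t\nu$ and $\mu - t\nu$ both lie in $\cP$ only if $\mu$ has "room" at all three atoms, which it does, so this naive argument suggests three-point measures are \emph{never} extremal unless the $e^{x^2}$-constraint is active. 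I expect the correct statement/argument therefore hinges on: for $\mu \in \cH_3$, the constraint $\int e^{x^2}\,d\mu = 2$ must in fact be active (one should check whether $\cH_3$ as \emph{defined} in \eqref{eq:MomentSetFinite} implicitly requires this, or whether Winkler's theorem in its sharp form already tells us that three-atom extreme points saturate the inequality) — so the honest work is to reconcile the definition of $\cH_3$ with the activity of the exponential moment constraint, and then the perturbation $\mu \pm t\nu$ is blocked because it violates $\int e^{x^2}\,d\mu \le 2$ in one of the two directions while the other direction leaves $\cH$ through positivity. I would lay out this case analysis carefully, treating $m=1,2,3$ separately, as the $m=3$ case carries all the content.
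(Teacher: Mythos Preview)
Your forward inclusion via Winkler's theorem is exactly the paper's argument: write $\cH$ with three inequality constraints $\mu(f_1) \le 0$, $\mu(-f_1) \le 0$, $\mu(f_2) \le 2$, invoke \cite[Theorem~2.1]{Winkler_1988} to see that every extreme point is supported on at most four atoms with the vectors $(1, x_i, -x_i, e^{x_i^2})$ linearly independent, observe that these vectors lie in a three-dimensional subspace so in fact $m \le 3$, and then identify linear independence with distinctness of the atoms via Lemma~\ref{the:LinIndGradients}.

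Your worry about the reverse inclusion is entirely justified, and in fact the equality as stated does not hold. Your own perturbation argument already shows it: if $\mu \in \cH_3$ has $\int e^{x^2}\,d\mu < 2$, then the one-dimensional space of signed measures $\nu$ on $\supp\mu$ with $\nu(1) = \nu(f_1) = 0$ furnishes $\mu \pm t\nu \in \cH$ for all small $t > 0$, so $\mu$ is not extreme. (When the constraint \emph{is} active, Lemma~\ref{the:LinIndGradients} guarantees $\nu(f_2) \ne 0$, so one of $\mu \pm t\nu$ violates $\int e^{x^2} \le 2$ for every $t > 0$ and $\mu$ is extreme---note the obstruction is the moment constraint, not positivity as you suggested.) The paper's own proof only argues the forward inclusion (``all extremal elements can be expressed in the form\ldots'') and never addresses the reverse; fortunately, only $\ex(\cH) \subseteq \cH_1 \cup \cH_2 \cup \cH_3$ is used downstream in Lemma~\ref{the:ReductionToFinite}, so you should simply prove that inclusion and drop the attempt at the converse.
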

\begin{proof}
Using functions $f_1(x) = x$ and $f_2(x) = e^{x^2}$, we see that
\begin{equation}
 \label{eq:MomentSetIneq}
 \cH
 \weq \{ \mu \in \cP \colon \mu(f_1) \le 0, \, \mu(-f_1) \le 0, \, \mu(f_2) \le 2\}.
\end{equation}
By applying \cite[Theorem 2.1:(a), Example 2.1:(a)]{Winkler_1988}
to the moment set \eqref{eq:MomentSetIneq}, 
we see that all extremal elements of $\cH$ can be expressed in the form
\[
 \mu
 \weq \sum_{i=1}^m p_i \de_{x_i},
\]
where $1 \le m \le 4$, $p_1,\dots,p_m > 0$, $p_1+\cdots+p_m=1$, and the vectors
$(1,x_i, -x_i, e^{x_i^2})$, $i=1,\dots,m$, are linearly independent.
By noting that $\sum_{i=1}^m c_i (1,x_i, -x_i, e^{x_i^2}) = 0$
if and only if $\sum_{i=1}^m c_i (1, x_i, e^{x_i^2}) = 0$,
it follows that the last requirement is equivalent to 
stating that the vectors
$(1,x_i, e^{x_i^2})$, $i=1,\dots,m$, are linearly independent.
Because these vectors are elements of $\R^3$, we must have
$1 \le m \le 3$.
In light of Lemma~\ref{the:LinIndGradients}, we also see that
for $m=3$, linear independence is equivalent to $x_1,\dots,x_m$ being distinct.
The same is obviously true for $m=1,2$, so the claim follows.
\end{proof}

\begin{lemma}
\label{the:ReductionToFinite}
For any real number $s \in \R$, there exists a probability
measure
$\mu_* \in \cH_2 \cup \cH_3$
such that
\[
 M_{\mu_*}(s)
 \weq
 \sup_{\mu \in \cH} M_\mu(s).
\]
\end{lemma}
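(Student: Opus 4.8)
The plan is to combine the three facts already established. By Proposition~\ref{the:MaximumIsAttained} the functional $\mu \mapsto M_\mu(s)$ attains a maximum on $\cH$; by Lemma~\ref{the:Continuity} this functional is continuous for weak convergence, and it is moreover linear in $\mu$; by Lemma~\ref{the:CompactMomentSet} the set $\cH$ is weakly compact, and it is convex because the constraints $\int x\,\mu(dx) = 0$ and $\int e^{x^2}\,\mu(dx) \le 2$ defining $\cH$ in \eqref{eq:MomentSet} are affine in $\mu$. A linear---hence convex and upper semicontinuous---functional on a compact convex subset of the locally convex space of finite signed Borel measures on $\R$ (equipped with the topology of weak convergence) attains its maximum at an extreme point; this is Bauer's maximum principle, and it also follows from the representation of elements of $\cH$ as barycenters of probability measures concentrated on $\ex(\cH)$ supplied by Winkler's theorem. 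Invoking Lemma~\ref{the:ExtremalElements}, which gives $\ex(\cH) = \cH_1 \cup \cH_2 \cup \cH_3$, we obtain some $\mu_* \in \cH_1 \cup \cH_2 \cup \cH_3$ with $M_{\mu_*}(s) = \max_{\mu \in \cH} M_\mu(s)$.

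It remains to discard the case $\mu_* \in \cH_1$. The only centered Dirac mass is $\de_0$, with $M_{\de_0}(s) = 1$. For the symmetric two-point law $\nu = \tfrac12 \de_{-a} + \tfrac12 \de_a$ with $0 < a \le \sqrt{\log 2}$ we have $\int x\,\nu(dx) = 0$ and $\int e^{x^2}\,\nu(dx) = e^{a^2} \le 2$, so $\nu \in \cH_2$. If $s \ne 0$, then $M_\nu(s) = \cosh(as) > 1$, so the maximum over $\cH$ is strictly larger than $M_{\de_0}(s)$; this forces $\mu_* \notin \cH_1$, hence $\mu_* \in \cH_2 \cup \cH_3$. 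If $s = 0$, then $M_\mu(0) = 1$ for every $\mu \in \cH$, so the maximum equals $1$ and is already realised by the above $\nu \in \cH_2$, and we simply take $\mu_* = \nu$.

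The main obstacle is the first step: turning the slogan ``a linear functional is maximised at an extreme point'' into a rigorous statement in this infinite-dimensional setting. The attainment result of Proposition~\ref{the:MaximumIsAttained} is precisely what makes Bauer's principle applicable, since one needs an actual maximiser to push out to the boundary; and one must confirm that the ambient space of finite signed measures under weak convergence is a Hausdorff locally convex topological vector space---so that the Krein--Milman and Bauer machinery, or equivalently Winkler's moment-set version, is available---and that $\cH$ is genuinely convex and compact, which is arranged by \eqref{eq:MomentSet} and Lemma~\ref{the:CompactMomentSet}. Once this is in place, the remaining work, namely excluding the degenerate point mass $\de_0$, is the short computation with $\nu$ above, so the substance of the lemma lies in the structural part rather than in any estimate.
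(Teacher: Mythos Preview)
Your proof is correct and follows essentially the same route as the paper: attain the maximum on $\cH$, use the extreme-point machinery (Bauer/Winkler) together with Lemma~\ref{the:ExtremalElements} to locate a maximiser in $\cH_1\cup\cH_2\cup\cH_3$, and then discard $\cH_1$. The only cosmetic difference is that the paper excludes $\de_0$ via Jensen's inequality ($M_{\de_0}(s)=1\le M_\mu(s)$ for all $\mu\in\cH$) rather than by exhibiting an explicit $\nu\in\cH_2$ with $M_\nu(s)>1$.
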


\begin{proof}
Fix $s \in \R$, and
denote $\al = \sup_{\mu \in \cH} M_\mu(s)$.
By Proposition~\ref{the:MaximumIsAttained}, there exists 
a probability measure $\bar\mu \in \cH$ such that $M_{\bar\mu}(s) = \al$. 
By recalling the representation \eqref{eq:MomentSetIneq}
and applying \cite[Theorem 3.1]{Winkler_1988}, we see that every probability measure
in $\cH$, and in particular $\bar\mu$, can be
represented as a convex combination
\[
 \bar\mu(B) \weq \int_{\ex(\cH)} \mu(B) \, \pi(d\mu)
\]
for some probability measure $\pi$ on the set $\ex(\cH)$ equipped with the
sigma-algebra of evaluation maps $\mu \mapsto \mu(B)$, $B \in \cB(\R)$.
It follows that
\[
 \al
 \weq \int_{\R} e^{sx} \bar\mu(dx)
 \weq \int_{\ex(\cH)} \int_{\R} e^{sx} \mu(dx) \, \pi(d\mu)
 \weq \int_{\ex(\cH)} M_{\mu}(s) \, \pi(d\mu).
\]
Therefore,
\[
 \int_{\ex(\cH)} (\al - M_{\mu}(s)) \, \pi(d\mu)
 \weq 0,
\]
and because the integrand is nonnegative, we conclude that
$M_{\mu}(s) = \al$ for $\pi$-almost every $\mu$.
In particular, the set $\{\mu \in \ex(\cH) \colon M_{\mu}(s) = \al\}$
is nonempty.  We conclude that there exists a probability measure $\mu_* \in \ex(\cH)$
such that 
\[
 M_{\mu_*}(s)
 \weq \sup_{\mu \in \cH} M_\mu(s).
\]

By Lemma~\ref{the:ExtremalElements}, we find that
\(
 \ex(\cH) = \cH_1 \cup \cH_2 \cup \cH_3.
\)
The unique element in $\cH_1$ is $\de_0$. By Jensen's inequality 
$M_{\de_0}(s) \le M_\mu(s)$ for all $\mu \in \cH$,
so that $\mu \mapsto M_\mu(s)$ is minimised, not maximised, at $\de_0$.
Therefore, we conclude that $\mu_* \in \cH_2 \cup \cH_3$.
\end{proof}

\subsubsection{From finitely supported to binary distributions}
\label{sec:ReductionToBinaryDistributions}

Lemma~\ref{the:NoMaximumOnH3} also shows that $\cH_3$ is not a closed subset of the compact set $\cH$.

\begin{lemma}
\label{the:NoMaximumOnH3}
For any $s \ne 0$, the functional $\mu \mapsto M_{\mu}(s)$ on $\cH$ does not attain a maximum on $\cH_3$.
\end{lemma}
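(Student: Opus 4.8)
The plan is a first-order (Lagrange/KKT) argument showing that \emph{no} $\mu\in\cH_3$ can be a local maximizer of $M_{\cdot}(s)$, so in particular none attains the supremum of $M_{\cdot}(s)$ over $\cH_3$ (nor, a fortiori, over $\cH$). Fix $s\neq 0$ and suppose $\mu=\sum_{i=1}^3 p_i\de_{x_i}\in\cH_3$, with $x_1<x_2<x_3$, all $p_i>0$, $\sum_i p_ix_i=0$ and $\sum_i p_ie^{x_i^2}\le 2$, were such a maximizer. Small perturbations of the parameters $(p_i,x_i)$ keeping the $x_i$ distinct, the $p_i$ positive, and respecting $\sum_i p_i=1$, $\sum_i p_ix_i=0$, $\sum_i p_ie^{x_i^2}\le 2$ yield measures in $\cH$, so $\mu$ is a local maximizer of the finite-dimensional problem of maximizing $\sum_i p_ie^{sx_i}$ under these constraints. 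The active constraint gradients at $\mu$ are linearly independent: the $p$-components of a dependence relation force $(1,x_i,e^{x_i^2})$, $i=1,2,3$, to be linearly dependent, which is excluded because the $x_i$ are distinct (Lemma~\ref{the:LinIndGradients}). Hence the KKT stationarity conditions give multipliers $\alpha,\gamma,\beta\in\R$ such that, for every $i\in\{1,2,3\}$,
\[
 se^{sx_i}=\alpha+2\beta x_ie^{x_i^2}
 \qquad\text{and}\qquad
 e^{sx_i}=\gamma+\alpha x_i+\beta e^{x_i^2},
\]
the first family obtained by differentiating in $x_i$ and dividing by $p_i>0$, the second by differentiating in $p_i$.

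Set $H(x)=e^{sx}-\beta e^{x^2}-\alpha x-\gamma$. The second family says $H(x_i)=0$, the first says $H'(x_i)=0$, so $H$ and $H'$ both vanish at the three distinct points $x_1,x_2,x_3$. If $\beta\le 0$ then $H''(x)=s^2e^{sx}-2\beta(1+2x^2)e^{x^2}>0$ everywhere, so $H$ is strictly convex and cannot vanish at three points --- contradiction; hence $\beta>0$. From the double zeros of $H$, Rolle's theorem gives $H'$ at least five distinct zeros --- namely $x_1,x_2,x_3$ together with one point strictly between each consecutive pair --- and therefore $H''$ at least four distinct zeros.

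The crux, which I expect to be the genuine obstacle, is to contradict this by bounding the number of zeros of $H''$ when $\beta>0$; counting zeros of $H$ or $H'$ directly is unpleasant, and the point is that one further differentiation followed by a logarithmic change of variables exposes strict concavity. Since $2\beta e^{x^2}>0$, dividing $H''(x)=0$ by it and taking logarithms shows that $H''(x)=0$ if and only if
\[
 \Phi(x)\ :=\ \log\tfrac{s^2}{2\beta}+sx-x^2-\log(1+2x^2)\ =\ 0 .
\]
A direct computation gives $\Phi''(x)=-2-\frac{4-8x^2}{(1+2x^2)^2}$, and $\Phi''<0$ for all $x$ because the inequality rearranges to $6+8x^4>0$. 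Thus $\Phi$ is strictly concave, hence has at most two zeros, so $H''$ has at most two zeros --- contradicting the four produced above. This contradiction shows that no $\mu\in\cH_3$ is a local maximizer, which proves the lemma. (As a sanity check, a two-atom measure only forces $H$ to have a double zero at each of two points, i.e.\ at most four zeros counting multiplicity; this borderline count is exactly what makes the maximum attainable in $\cH_2$ but not in $\cH_3$.)

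Two closing remarks. In the non-degenerate case $\sum_i p_ie^{x_i^2}<2$ the KKT machinery can be replaced by an elementary perturbation: with the supports fixed, the admissible weight changes form a one-parameter affine family along which $\sum_i p_ie^{sx_i}$ is affine with nonzero slope --- the slope vanishes only if $e^{sx}$ agrees with an affine function at $x_1,x_2,x_3$, which is impossible --- so $M_\mu(s)$ can be strictly increased within $\cH$. Also, since $\cH$ is compact and $M_{\cdot}(s)$ is continuous, the non-attainment just proved forces any maximizing sequence in $\cH_3$ to have a subsequential limit in $\cH\setminus\cH_3$ (two atoms merging), which is how the lemma simultaneously shows that $\cH_3$ is not closed in $\cH$.
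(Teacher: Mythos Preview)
Your proof is correct and follows essentially the same approach as the paper's: parametrize $\cH_3$ in $\R^6$, apply first-order (Lagrange/KKT) conditions using the linear independence guaranteed by Lemma~\ref{the:LinIndGradients}, define the auxiliary function $H(x)=e^{sx}-\beta e^{x^2}-\alpha x-\gamma$ with double zeros at $x_1,x_2,x_3$, invoke Rolle to force at least four zeros of $H''$, and then contradict this via the strict concavity of $\Phi$ (equivalently, the paper's strict convexity of $w=-\Phi$), which yields the same computation $6+8x^4>0$. The only cosmetic difference is that you dispose of the case $\beta\le 0$ directly from strict convexity of $H$ before running Rolle, whereas the paper handles the corresponding case $\lambda_2\ge 0$ after the Rolle step; your closing remarks on the inactive-constraint case and on non-closedness of $\cH_3$ are nice but not needed for the argument.
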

\begin{proof}
Every probability measure on $\R$ having a support of size 3 can be represented as
\begin{equation}
\label{eq:Dirac3}
 \mu \weq \sum_{i=1}^3 p_i \de_{x_i},
\end{equation}
where $(p,x) \in \R^6$ is such that
$x_1,x_2,x_3$ are distinct, and
$p_1,p_2,p_3 > 0$ satisfy 
$p_1+p_2+p_3 = 1$.
Under such representation, we see that $\mu \in \cH_3$ if and only if
$p_1 x_1 + p_2 x_2 + p_3 x_ 3 = 0$ and
$p_1 e^{x_1^2} + p_2e^{x_2^2} + p_3 e^{x_3^2} \le 2$.
Let us define 
\[
 G_3
 \weq \Big\{ (p,x) \in (0,1)^3 \times \R^3 \colon \text{$x_1,x_2,x_3$ distinct} \Big\},
\]
and
\[
 H_3
 \weq \Big\{ (p,x) \in G_3 \colon \, g_0(p,x) = 0, \, g_1(p,x) = 0, \, g_2(p,x) \le 0 \Big\},
\]
where
$g_k(p,x) = \sum_{i=1}^3 p_i \psi_k(x_i) - c_k$
are defined in terms of
numbers $c_0 = 1$, $c_1=0$, $c_2=2$, and
functions
\(
 \psi_0(t) = 1,
 \psi_1(t) = t,
 \psi_2(t) = e^{t^2}.
\)
For probability measures of form \eqref{eq:Dirac3}, we see that
$\mu \in \cH_3$ if and only if $(p,x) \in H_3$. 
We conclude that
\[
 \sup_{\mu \in \cH_3} M_\mu(s)
 \weq \sup_{(p,x) \in H_3} f(p,x),
\]
where
\[
 f(p,x) \weq \sum_{i=1}^3 p_i e^{s x_i}.
\]
To prove the claim, it suffices to verify that $f \colon G_3 \to \R$ does not attain a maximum on $H_3 \subset G_3$.

We will assume the contrary and derive contradiction. So, let us assume that there exists a point
$(p,x) \in H_3$ such that $f(p,x) \ge f(p',x')$ for all $(p',x') \in H_3$.
The functions $f,g_0,g_1,g_2$ are continuously differentiable on the open set $G_3 \subset \R^6$, 
with gradients given by
\begin{align*}
 \nabla f(p,x) &\weq \big( e^{s x_1}, e^{sx_2}, e^{sx_3}, \,
 p_1 s e^{sx_1}, p_2 s e^{sx_2}, p_3 s e^{sx_3} \big), \\
 \nabla g_0(p,x) &\weq \big( 1, 1, 1, \, 0, 0, 0 \big), \\
 \nabla g_1(p,x) &\weq \big( x_1, x_2, x_3, \, p_1, p_2, p_3 \big), \\
 \nabla g_2(p,x) &\weq \big( e^{x_1^2}, e^{x_2^2}, e^{x_3^2}, \,
 2 p_1 x_1 e^{x_1^2}, 2 p_2 x_2 e^{x_2^2}, 2 p_3 x_3 e^{x_3^2} \big).
\end{align*}
We see that $\nabla g_0(p,x)$, $\nabla g_1(p,x)$, $\nabla g_2(p,x)$ are linearly independent,
because the vectors $(1,1,1)$, $(x_1,x_2,x_3)$, $(e^{x_1^2},  e^{x_2^2},  e^{x_3^2} )$
are linearly independent (Lemma~\ref{the:LinIndGradients}).
By the Lagrange multiplier theorem \cite{McShane_1973}, there exist
real numbers $\la_0, \la_1, \la_2$ such that
\begin{equation}
 \label{eq:LagrangeMultiplier}
 \nabla f(p,x)
 + \la_0 \nabla g_0(p,x) + \la_1 \nabla g_1(p,x) + \la_2 \nabla g_2(p,x)
 \weq 0.
\end{equation}

The gradients can also be written as
\begin{align*}
 \nabla f(p,x)
 &\weq \big( \phi(x_1), \phi(x_2), \phi(x_3), \, p_1 \phi'(x_1), p_2 \phi'(x_2), p_3 \phi'(x_3) \big), \\
 \nabla g_k(p,x)
 &\weq \big( \psi_k(x_1), \psi_k(x_2), \psi_k(x_3), \, p_1 \psi_k'(x_1), p_2 \psi_k'(x_2), p_3 \psi_k'(x_3) \big).
\end{align*}
Because $p_1, p_2, p_3$ are nonzero, 
we see that \eqref{eq:LagrangeMultiplier} is equivalent to requiring that
\begin{align*}
 \phi(x_i) + \sum_{k=0}^2 \la_k \psi_k(x_i) &\weq 0, \\
 \phi'(x_i) + \sum_{k=0}^2 \la_k \psi_k'(x_i) &\weq 0,
\end{align*}
for all $i=1,2,3$. It follows that 
$u(x_i) = 0$ and $u'(x_i) = 0$ for $i=1,2,3$,
where
\[
 u(t)
 \weq \phi(t) + \sum_{k=0}^2 \la_k \psi_k(t).
\]
Because $u$ vanishes at distinct $x_1,x_2,x_3$, Rolle's theorem \cite[Theorem 5.10]{Rudin_1976}
implies that
$u'$ vanishes at some point in each open interval between two consecutive points from $x_1,x_2,x_3$.
We also know that $u'$ vanishes at each of the points $x_1,x_2,x_3$.
Therefore, $u'$ vanishes in at least 5 distinct points.
It follows by Rolle's theorem that $u''$ vanishes 
at some point in each of the 4 open intervals between the zeros of $u'$.  
Therefore, $u''$ has at least 4 zeros.

To arrive at a contradiction, we will show that
\[
 u''(t)
 \weq s^2 e^{st} + 2 \la_2 (1+2t^2) e^{t^2}.
\]
has at most two zeros. If $\la_2 \ge 0$, then $u'' > 0$ does not vanish anywhere.
Assume next that $\la_2 < 0$. Then $u''(t) = 0$ is equivalent to
\[
 s^2 e^{st}
 \weq 2 \abs{\la_2} (1+2t^2) e^{t^2}.
\]
This is equivalent to $w(t) = 0$ with
\[
 w(t)
 \weq t^2-st + \log (1+2t^2) + \log \frac{2 \abs{\la_2}}{s^2}.
\]
A simple computation shows that
\[
 w''(t)
 \weq \frac{6+8t^4}{(1+2t^2)^2}
\]
We see that $w''$ is strictly positive everywhere. Hence $w$ is strictly convex
and has at most two zeros.  Hence $u''$ has at most two zeros.
This contradicts our derivation that $u''$ has at least 4 zeros. 
Therefore, the assumption of the existence of a point $(p,x) \in H_3$ that
maximises $f$ on $H_3$ must be false.
\end{proof}

\begin{proof}[Proof of Proposition~\ref{the:ReductionToBinary}]
To avoid trivialities, we assume that $s \ne 0$.
By Lemma~\ref{the:ReductionToFinite},
we know that $\sup_{\mu \in \cH} M_\mu(s) = M_{\mu_*}(s)$ for some
probability measure $\mu_*$ in $\cH_ 2 \cup \cH_3$.
Lemma~\ref{the:NoMaximumOnH3} implies that $\mu_* \notin \cH_3$. Hence $\mu_* \in \cH_2$.
\end{proof}

\subsection{Upper bound for binary distributions}
\label{sec:UpperBoundForBinaryDistributions}

Recall the definitions of $\cH$ and $\cH_2$ in \eqref{eq:MomentSet}--\eqref{eq:MomentSetFinite}.
We denote the sub-Gaussian parameter of a probability measure $\mu \in \cH$ by
\[
 \sig_\mu
 \weq \inf\left\{\sig \in [0,\infty):
 \int e^{s x} \mu(dx) \le e^{\sig^2 s^2/2} \ \text{for all $s \in \R$}
 \right\}.
\]

\begin{proposition}
\label{the:UpperBoundBinary}
$\sig_\mu \le \sqrt{\log 2}$ for all $\mu \in \cH_2$.
\end{proposition}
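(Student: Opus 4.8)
The plan is to parametrise. Any $\mu\in\cH_2$ can be written as $\mu = q\,\de_u + (1-q)\,\de_{-v}$ with $u,v>0$, where the centering condition $\int x\,\mu(dx)=0$ forces $q=\tfrac{v}{u+v}$, so that membership in $\cH_2$ is exactly the constraint $\frac{v e^{u^2}+u e^{v^2}}{u+v}\le 2$. Since the sub-Gaussian parameter is invariant under $X\mapsto -X$ and $\cH_2$ is invariant under reflection, I may assume $u\ge v$. The case $u=v$ is immediate: then $q=\tfrac12$, the constraint reads $e^{u^2}\le 2$, and $\sig_\mu^2=u^2\le\log 2$ because $M_\mu(s)=\cosh(su)\le e^{u^2 s^2/2}$. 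So from now on assume $r:=u/v>1$.

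The key input is the exact value of the sub-Gaussian parameter of a two-point (equivalently, centred Bernoulli) law, namely
\[
 \sig_\mu^2 \ = \ \frac{u^2-v^2}{2\log(u/v)},
\]
which may be taken from the literature or derived directly by showing that $s\mapsto 2s^{-2}\log M_\mu(s)$ has a single interior critical point and evaluating there. Granting this, the claim $\sig_\mu\le\sqrt{\log 2}$ is equivalent to $\frac{u^2-v^2}{2\log(u/v)}\le\log 2$ for all $(u,v)$ satisfying the constraint. Put $w=v^2$ and recall $r=u/v>1$; the constraint reads $F(w)\le 2(r+1)$ with $F(w):=e^{r^2w}+re^{w}$, and $F$ is strictly increasing. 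Hence it suffices to prove $F(\phi(r))\ge 2(r+1)$ for $\phi(r):=\frac{2\log 2\,\log r}{r^2-1}$, since then $F(w)\le 2(r+1)\le F(\phi(r))$ forces $w\le\phi(r)$, so that $\sig_\mu^2 = w\cdot\frac{r^2-1}{2\log r}\le\phi(r)\cdot\frac{r^2-1}{2\log r}=\log 2$.

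A short computation, using $e^{\phi(r)}=r^{\,2\log 2/(r^2-1)}$ and $r^2\phi(r)=2\log 2\,\log r+\phi(r)$, rewrites $F(\phi(r))\ge 2(r+1)$ as the single-variable inequality
\[
 F(\phi(r)) \ = \ r^{\,2\log 2/(r^2-1)}\bigl(r^{\,2\log 2}+r\bigr) \ \ge \ 2(r+1), \qquad r>1,
\]
equivalently, after taking logarithms and substituting $t=\log r$,
\[
 \frac{2t\log 2}{e^{2t}-1} \ + \ \log\bigl(e^{2t\log 2}+e^{t}\bigr) \ - \ \log\bigl(e^{t}+1\bigr) \ - \ \log 2 \ \ge \ 0, \qquad t>0.
\]

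I expect the main obstacle to be exactly this last inequality. Both its left-hand side and its first derivative vanish as $t\to 0^+$, reflecting that the bound is saturated precisely at $r=1$, by the rescaled Rademacher law $\tfrac12\de_{\sqrt{\log 2}}+\tfrac12\de_{-\sqrt{\log 2}}$; consequently any estimate must be sharp to second order at $t=0$, with essentially no slack. I would attack it by showing the left-hand side is nondecreasing on $(0,\infty)$, which after differentiation reduces to checking that an explicit combination of exponentials (three terms, one negative and two positive) is nonnegative — a statement one can hope to settle either by a power-series comparison or by a further convexity/monotonicity argument. A secondary technical point, should a fully self-contained treatment be desired, is justifying the closed form for $\sig_\mu^2$ displayed above, i.e.\ identifying the supremum of $s\mapsto 2s^{-2}\log M_\mu(s)$; this is the Kearns–Saul computation and can be handled by locating the unique critical point of $s\mapsto s M_\mu'(s)-2M_\mu(s)\log M_\mu(s)$ and simplifying the value of the ratio $M_\mu'(s)/(sM_\mu(s))$ there.
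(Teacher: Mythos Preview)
Your approach is essentially the paper's: same citation for the closed-form $\sig_\mu^2=\frac{u^2-v^2}{2\log(u/v)}$, same reduction to the single-variable inequality $F(\phi(r))\ge 2(r+1)$ (your $\phi$ is the paper's $\alpha$, your $r$ is the paper's $u$), and the same strategy of proving monotonicity in $r$ starting from equality at $r=1$. Your logarithmic substitution $t=\log r$ is cosmetic; showing your $g(t)$ is nondecreasing is exactly the paper's $F'(u)\ge 0$.

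The one place you stop short is the actual verification of that monotonicity, which you correctly flag as the main obstacle. The paper's maneuver there is worth knowing: writing $F'(u)=\frac{h_1(u)e^{\alpha(u)}+h_2(u)e^{u^2\alpha(u)}}{(1+u)(u^2-1)^2}$, one first shows $h_2\ge 0$ on its own (via $h_2(1)=h_2'(1)=h_2''(1)=0$ and $h_2'''\ge 0$), which lets you replace $e^{u^2\alpha(u)}$ by the smaller $e^{\alpha(u)}$ and reduce to $h_1+h_2\ge 0$; this sum is handled by the same third-derivative trick. Your proposed ``power-series comparison or further convexity argument'' would eventually find this, but the two-step structure (isolate the coefficient of the larger exponential first) is the specific idea that makes the computation clean.
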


\begin{proof}
Fix a $\mu \in \cH_2$.
Being a probability measure supported on two points, we find that
$\mu = p \de_{x_1} + (1-p) \de_{x_2}$ with $p \in (0,1)$ and $x_1 \ne x_2$.
By swapping $x_1$ and $x_2$ if necessary, we may assume without loss of generality that
$p \in [1/2,1)$.
Because $\mu$ is centered, we find that $x_2 = - \frac{p}{1-p} x_1$.
For convenience, let us parametrise $p = \frac{u}{1+u}$ in terms of $u \in [1,\infty)$.
Then $\int e^{x^2} \mu(dx) = G(u,x_1^2)$ where
\[
 G(u,t)
 \weq \frac{u}{1+u} e^t + \frac{1}{1+u} e^{u^2 t}.
\]
In this case we know \cite[Theorem 3.1]{Buldygin_Moskvichova_2013}
that
\[
 \sig_\mu^2
 \weq
 \begin{cases}
  x_1^2 \frac{u^2-1}{2 \log u}, &\qquad u \ne 1, \\
  x_1^2, &\qquad u = 1.
 \end{cases}
\]

Assume first that $u=1$. Then
$\int e^{x^2} \mu(dx) = e^{x_1^2}$,
so that the condition $\int e^{x^2} \mu(dx) \le 2$
implies that
$\sig_\mu^2 = x_1^2 \le \log 2$.

Assume next that $u>1$.
Denote $\al(u) = 2 \log 2 \frac{\log u}{u^2-1}$.
Then our claim amounts to verifying that
$x_1^2 \le \al(u)$ for all $x_1 \in \R$ 
such that $G(u,x_1^2) \le 2$.
Because $G$ is strictly increasing in its second input,
this is equivalent to verifying that $F(u) = G(u,\al(u))$ satisfies $F(u) \ge 2$.
We note that $F$ is continuously differentiable on $(1,\infty)$, with $F(1+) = 2$.
Therefore, it is sufficient to show that $F'(u) \ge 0$ for all $u > 1$.

Observe that
\begin{equation}
 \label{eq:FPrimePre}
 F'(u)
 \weq \partial_1 G(u,\al(u)) + \al'(u) \partial_2 G(u,\al(u)),
\end{equation}
where
\begin{align*}
 \partial_1 G(u, t)
 &\weq (1+u)^{-2} \Big( e^t  + (2u(1+u)t-1) e^{u^2 t} \Big) \\
 \partial_2 G(u, t)
 &\weq u (1+u)^{-1} \Big( e^t + u e^{u^2 t} \Big), \\
 \al'(u)
 &\weq 2 \log 2 \, u^{-1} (u^2-1)^{-2} \Big( u^2 - 1 - 2 u^2  \log u  \Big).
\end{align*}
By substituting the above equations into \eqref{eq:FPrimePre}, and simplifying, we find that
\begin{equation}
 \label{eq:FPrime}
 F'(u) \weq \frac{h_1(u) e^{\al(u)}}{(1+u)(u^2 - 1)^2} + \frac{h_2(u) e^{u^2 \al(u)}}{(1+u)(u^2 - 1)^2},
\end{equation}
where
\begin{align*}
 h_1(u)
 &\weq 2\log 2 \cdot (u^2 - 1 - 2 u^2 \log u) + (1+u)(u-1)^2, \\
 h_2(u)
 &\weq 2\log 2 \cdot (u^3 - u - 2 u \log u) - (1+u)(u-1)^2.
\end{align*}
Differentiation shows that $h_2(1)=h_2'(1)=h_2''(1)=0$, and that
$h_2'''(u) = 2\log 2 \cdot (6 + 2/u^2) - 6 \ge 0$
for $u \ge 1$.  Therefore, $h_2(u) \ge 0$.
Because $e^{u^2 \al(u)} \ge e^{\al(u)}$,
we find by \eqref{eq:FPrime} that
\begin{equation}
 \label{eq:FPrimeLB}
 F'(u)
 \wge \frac{h(u) e^{\al(u)}}{(1+u)(u^2 - 1)^2},
\end{equation}
where $h(u) = h_1(u)+h_2(u)$.
Differentiation shows that
\[
 h(u)
 \weq 2\log 2 \cdot \Big( u^3 + u^2 - u - 1 - 2 u^2 \log u - 2 u \log u \Big) 
\]
satisfies $h(1)=h'(1)=h''(1)=0$,
together with $h'''(u) = 2\log 2 \cdot \big( 6 - \frac{2}{u^2} - \frac{4}{u} \big) \ge 0$.
Therefore, $h(u) \ge 0$ for $u > 1$.  It follows that $F'(u) \ge 0$ for all $u > 1$ due to \eqref{eq:FPrimeLB}.
\end{proof}

\subsection{Concluding the proof of the upper bound}
\label{sec:ConclusionUB}

\begin{proof}[Proof of Theorem~\ref{the:Main}: Upper bound]
Let $X$ be a random variable such that $\E X = 0$ and $\sgnorm{X} < \infty$.
Choose a number $K > \sgnorm{X}$, and define $Y = X/K$.
Then $Y$ is a centered random variable such that
\[
 \E e^{Y^2}
 \weq \E e^{X^2/K^2}
 \wle 2.
\]
We conclude that the probability distribution $\mu$ of $Y$ belongs to $\cH$.

Fix $s \in \R$, and denote by $M_\mu(s) = \E e^{sY}$ the moment generating function of $Y$.
By Proposition~\ref{the:ReductionToBinary}, we know that
for some $\mu_* \in \cH_2$,
\[
 M_{\mu_*}(s)
 \wge
 M_\mu(s).
\]
By Proposition~\ref{the:UpperBoundBinary},
the sub-Gaussian parameter of $\mu_*$ is bounded by
$\sig_{\mu_*} \le \sqrt{\log 2}$,
so that $M_{\mu_*}(s) \le e^{\log 2 \cdot s^2/2}$.
We conclude that the moment generating function of $Y$
is bounded by 
\begin{equation}
 \label{eq:MGFBoundY}
 \E e^{sY}
 \wle e^{\log 2 \cdot s^2/2}
 \qquad \text{for all $s \in \R$}.
\end{equation}

By applying \eqref{eq:MGFBoundY} with input $sK$, 
it follows that for all $s \in \R$,
\[
 \E e^{sX}
 \weq \E e^{(sK)Y}
 \wle e^{\log 2 \cdot (sK)^2/2}
 \weq e^{(\sqrt{\log 2} \, K)^2 s^2/2}
\]
Because the above bound holds for all $K > \sgnorm{X}$, we conclude that
$\sig_X \le \sqrt{\log 2} \, \sgnorm{X}$.
\end{proof}

\section{Proof of lower bound}
\label{sec:LowerBound}

For completeness, we include a short proof of the lower bound in Theorem~\ref{the:Main}, even though the result is implicit in earlier works \cite{Buldygin_Kozachenko_2000, Li_2024+, Wainwright_2019} and can be obtained, e.g., by evaluating \cite[Lemma~1.6]{Buldygin_Kozachenko_2000} or by tracking constants in the proof of \cite[Theorem~2.6]{Wainwright_2019}.

\begin{proof}[Proof of Theorem~\ref{the:Main}: Lower bound]
Fix a number $K > \sqrt{2} \sig_X$,
and recall that
the moment generating function
of the standard Gaussian distribution with
the probability density function
$\phi(x) = \frac{1}{\sqrt{2\pi}} e^{-x^2/2}$
is given by
\(
 e^{t^2/2}
 = \int_{-\infty}^\infty e^{t u} \phi(u) du.
\)
By substituting $t = \sqrt{2} X/K$, taking expectations, and
applying Fubini's theorem,
we find that
\begin{align*}
 \E e^{X^2/K^2}
 \weq \int_{-\infty}^\infty \E e^{(u \sqrt{2}/K) X} \phi(u) \, du.
\end{align*}
The definition of the sub-Gaussian parameter implies that
\(
 \E e^{(u \sqrt{2}/K) X}
 \le e^{\sig_X^2 u^2 / K^2}.
\)
Observe also that
\(
 e^{\sig_X^2 u^2 / K^2} \phi(u)
 = \phi(u/\tau)
\)
where $\tau = (1 - 2 \sig_X^2/K^2)^{-1/2}$.
It follows that
\(
 \int_{-\infty}^\infty \E e^{(u \sqrt{2}/K) X} \phi(u) \, du
 \le \int_{-\infty}^\infty \phi(u/\tau) \, du
 = \tau,
\)
so that
\[
 \E e^{X^2/K^2}
 \wle (1 - 2 \sig_X^2/K^2)^{-1/2}.
\]
This above inequality reveals that
$\E e^{X^2/K^2} \le 2$
whenever $K > \sqrt{8/3} \, \sig_X$.
Therefore,
$\sgnorm{X} \le \sqrt{8/3} \, \sig_X$,
or equivalently, $\sig_X \ge \sqrt{3/8} \, \sgnorm{X}$.
\end{proof}

\appendix

\section{Auxiliary results}

\begin{lemma}
\label{the:LinIndGradients}
For any $x_1,x_2,x_3 \in \R$, the matrix
\[
 V
 =
 \begin{bmatrix}
 1 & 1 & 1 \\
 x_1 & x_2 & x_3 \\
 e^{x_1^2} & e^{x_2^2} & e^{x_3^2} \\
 \end{bmatrix}
\]
is invertible if and only if $x_1,x_2,x_3$ are distinct.
\end{lemma}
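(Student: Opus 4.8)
The plan is to handle the two implications separately, with the substantive direction reducing to a Rolle's theorem argument on the auxiliary function $g(t) = c_0 + c_1 t + c_2 e^{t^2}$; the key structural fact is that $\{1,\,t,\,e^{t^2}\}$ is a Chebyshev system on $\R$, which here is witnessed by the observation that a suitable second derivative never vanishes.

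First, the easy direction: if $x_i = x_j$ for some $i \neq j$, then $V$ has two identical columns, hence $\det V = 0$ and $V$ is not invertible. For the converse, suppose $x_1, x_2, x_3$ are distinct but $\det V = 0$. Then the rows of $V$ are linearly dependent, so there exist real numbers $c_0, c_1, c_2$, not all zero, with $c_0 + c_1 x_i + c_2 e^{x_i^2} = 0$ for $i = 1,2,3$. Equivalently, the smooth function $g(t) = c_0 + c_1 t + c_2 e^{t^2}$ vanishes at the three distinct points $x_1 < x_2 < x_3$.

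I would then rule this out. If $c_2 = 0$, then $g$ is affine and has at most one zero unless $c_0 = c_1 = 0$, contradicting $(c_0,c_1,c_2) \neq 0$; so $c_2 \neq 0$. Applying Rolle's theorem on the intervals $(x_1,x_2)$ and $(x_2,x_3)$ gives two distinct zeros of $g'$, and a further application of Rolle's theorem between them gives a zero of $g''$. But a direct computation yields $g''(t) = 2 c_2 (1 + 2t^2) e^{t^2}$, which is nowhere zero since $c_2 \neq 0$ and $(1+2t^2)e^{t^2} > 0$ for all $t$ — a contradiction. Hence $\det V \neq 0$ and $V$ is invertible.

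The only point needing a little care is the case distinction on whether $c_2$ vanishes; apart from that the argument is routine, and in fact this is exactly the computation of $u''$ already used in the proof of Lemma~\ref{the:NoMaximumOnH3} (with $s$ absent), so no new ideas are required.
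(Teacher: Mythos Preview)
Your proof is correct and follows essentially the same approach as the paper: both reduce to showing that $g(t)=c_0+c_1 t+c_2 e^{t^2}$ cannot have three distinct zeros, via the computation $g''(t)=2c_2(1+2t^2)e^{t^2}$. The only cosmetic difference is that the paper packages the Rolle argument as ``a strictly convex (or concave) function has at most two zeros,'' whereas you invoke Rolle's theorem explicitly.
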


\begin{proof}
If $x_1,x_2,x_3$ are not distinct, then neither are the
columns of $V$, and hence $V$ cannot be invertible.

For the converse implication, it suffices to verify that
the rows $v_1,v_2,v_3$ of $V$ are linearly independent
when $x_1,x_2,x_3$ are distinct.  To do this,
assume that
\(
c_1 v_1 + c_2 v_2 + c_3 v_3 = 0
\)
for some scalars $c_1,c_2,c_3$.
Then the function
\[
 f(t) = c_1 + c_2 t + c_3 e^{t^2}
\]
vanishes at three distinct points \( x_1, x_2, x_3 \).
We note that
\(
 f''(t)
 \weq 2c_3(1 + 2t^2) e^{t^2}.
\)
If \( c_3 > 0 \), then \( f \) is strictly convex, and a strictly convex function cannot have three distinct zeros.  
If \( c_3 < 0 \), then \( -f \) is strictly convex, and similarly, \( f \) cannot have three distinct zeros.  
Therefore, \( c_3 = 0 \), which implies that
\[
f(t) = c_1 + c_2 t.
\]

Because \( f \) vanishes at points \( x_1 \ne x_2 \), and
\(
 0 = f(x_2) - f(x_1) = c_2(x_2 - x_1),
\)
we see that \( c_2 = 0 \). 
It follows that \( c_1 = f(x_1) = 0 \).  
Thus, \( c_1 = c_2 = c_3 = 0 \), which confirms that the rows 
\( v_1, v_2, v_3 \) are linearly independent.
\end{proof}

\bibliographystyle{amsplain}
\bibliography{lslReferences}


\ifincludeleftovers

\section{Leftovers}

\section{Old proof of the lower bound}

We will utilise the following well-known result
whose proof is included here for completeness.

\begin{lemma}[{\cite[Lemma 1.6]{Buldygin_Kozachenko_2000}}]
\label{the:SGParameterUB}
If $\E X = 0$ and $0 < \sigma_X < \infty$, then 
\begin{equation}
 \label{eq:SGParameterUB}
 \E \exp \bigg( \frac{s X^2}{2 \sigma_X^2} \bigg)
 \wle \frac{1}{\sqrt{1-s}}
 \qquad \text{for all $0 < s < 1$}.
\end{equation}

\end{lemma}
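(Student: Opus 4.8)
The plan is to recycle the Gaussian-smoothing argument that already appears in Section~\ref{sec:LowerBound}, reorganised so that the free scale is the parameter $s$ rather than an auxiliary constant $K$. Concretely, given $0 < s < 1$ I would set $K^2 = 2\sigma_X^2/s$, so that $K > \sqrt{2}\,\sigma_X$ and the exponent $sX^2/(2\sigma_X^2)$ becomes exactly $X^2/K^2$; proving \eqref{eq:SGParameterUB} then reduces to establishing $\E e^{X^2/K^2} \le (1-2\sigma_X^2/K^2)^{-1/2}$.

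To get this bound I would start from the Gaussian moment generating function identity $e^{t^2/2} = \int_{-\infty}^\infty e^{tu}\phi(u)\,du$ with $\phi(u) = \frac{1}{\sqrt{2\pi}}e^{-u^2/2}$, substitute $t = \sqrt{2}\,X/K$ to write $e^{X^2/K^2}$ as a Gaussian average of $e^{(\sqrt{2}\,u/K)X}$, take expectations, and interchange $\E$ with $\int (\cdot)\,\phi(u)\,du$ by Tonelli's theorem (the integrand is nonnegative). Next I would invoke the definition of the sub-Gaussian parameter — here $\E X = 0$ enters — to bound $\E e^{(\sqrt{2}\,u/K)X} \le e^{\sigma_X^2 u^2/K^2}$ for every $u$, observe that $e^{\sigma_X^2 u^2/K^2}\phi(u) = \phi(u/\tau)$ with $\tau = (1-2\sigma_X^2/K^2)^{-1/2}$ (this is the step where $K^2 > 2\sigma_X^2$, equivalently $s<1$, is needed for $\tau$ to be real), and integrate to obtain $\int_{-\infty}^\infty \phi(u/\tau)\,du = \tau$. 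Substituting back $2\sigma_X^2/K^2 = s$ then gives $\E e^{sX^2/(2\sigma_X^2)} \le (1-s)^{-1/2}$, which is exactly the claim.

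I do not expect a genuine obstacle: the whole proof is a one-line change of variables feeding into the computation already carried out for the lower bound in Theorem~\ref{the:Main}. The only points deserving a word of care are the Tonelli interchange (handled by nonnegativity, so no integrability hypothesis beyond $\E X = 0$ and $\sigma_X < \infty$ is required) and the restriction $s < 1$, which is precisely what keeps the Gaussian integral $\int_{-\infty}^\infty e^{\sigma_X^2 u^2/K^2}\phi(u)\,du$ finite; the hypothesis $\sigma_X > 0$ is used only to make $K = \sqrt{2}\,\sigma_X/\sqrt{s}$ well defined and positive.
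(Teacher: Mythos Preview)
Your proposal is correct and follows essentially the same Gaussian-smoothing argument as the paper's proof: represent $e^{t^2/2}$ as a Gaussian integral, swap expectation and integral, apply the sub-Gaussian MGF bound, and evaluate the resulting Gaussian. The only cosmetic difference is that you parametrise via $K^2 = 2\sigma_X^2/s$ (mirroring Section~\ref{sec:LowerBound}) while the paper substitutes $t = \sqrt{s}\,x/\sigma_X$ directly; your use of Tonelli in place of Fubini is a minor but clean touch.
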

\begin{proof}
We will start by transforming the left side of \eqref{eq:SGParameterUB}
into a form involving $X$ instead of $X^2$ in the exponent.
\rnote{This is well written in an earlier version, but we no longer need this.}
By substituting
$t = \frac{\sqrt{s}}{\sig_X} x$
and performing a change of variables $v = \frac{\sqrt{s}}{\sig_X} u$,
we see that
\[
 \exp\bigg( \frac{sx^2}{2 \sig_X^2} \bigg)
 \weq \int_{-\infty}^\infty e^{\frac{\sqrt{s}}{\sig_X} u x} \frac{1}{\sqrt{2\pi}} e^{-u^2/2} \, du
 \weq \frac{\sig_X}{\sqrt{2\pi s}} \int_{-\infty}^\infty e^{v x} e^{-\sig_X^2 v^2/(2s)} \, dv.
\]
Let us now plug in the random variable $X$ in place of $x$ in the above equation,
and take expectations on both sides.  By Fubini's theorem, it follows that
\begin{equation}
 \label{eq:SGParameterUB1}
 \E \exp\bigg( \frac{sX^2}{2 \sig_X^2} \bigg)
 \weq \frac{\sig_X}{\sqrt{2\pi s}} \int_{-\infty}^\infty \E e^{v X} e^{-\sig_X^2 v^2/(2s)} \, dv.
\end{equation}

Because $\sig_X$ is the sub-Gaussian parameter of $X$, we find that
$\E e^{v X} \le e^{\sig_X^2 v^2/2}$ for all $v \in \R$.
Therefore, the integral on the right side of \eqref{eq:SGParameterUB1}
bounded by
\begin{equation}
 \label{eq:SGParameterUB2}
 \int_{-\infty}^\infty \E e^{v X} e^{-\sig_X^2 v^2/(2s)} \, dv
 \wle \int_{-\infty}^\infty e^{-(1/s -1) \sig_X^2 v^2/2} \, dv
 \weq \sqrt{\frac{2\pi}{(1/s -1) \sig_X^2}},
\end{equation}
where the equality follows by recognising that the integrand in the middle equals
a non-normalised probability density function of a centered Gaussian
distribution with variance $\frac{1}{(1/s -1) \sig_X^2}$.
Now the claim follows by combining \eqref{eq:SGParameterUB1}--\eqref{eq:SGParameterUB2}.
\end{proof}

\begin{proof}[Proof of Theorem~\ref{the:Main}: Lower bound]
Assume first that $\sig_X=0$.  Then $\E e^{sX} \le 1$ for all $s \in \R$.
Because $e^{s\abs{X}} \le e^{s X} + e^{-s X}$, we see by taking expectations
that $\E e^{s\abs{X}} \le 2 $.
Markov's inequality then implies that for any $s,\eps > 0$,
\[
 \pr( \abs{X} \ge \eps)
 \weq \pr( e^{s \abs{X}} \ge e^{s \eps})
 \wle e^{-s\eps}. 
\]
By taking $s \uparrow \infty$, we see that $\pr( \abs{X} \ge \eps) = 0$ for all $\eps > 0$.
Therefore $X = 0$ almost surely.  Then $\E e^{X^2/K^2} \le 2$ for all $K > 0$,
and we conclude that $\sgnorm{X} = 0$.

Assume next that $0 < \sigma_X < \infty$.
By applying Lemma~\ref{the:SGParameterUB}, we then find that
\[
 \E \exp \bigg( \frac{s X^2}{2 \sigma_X^2} \bigg)
 \wle \frac{1}{\sqrt{1-s}}
 \qquad \text{for all $0 < s < 1$}.
\]
By plugging $s = \frac{3}{4}$ and $K = \sqrt{\frac{8}{3}}\sig_X$, we find that
\[
 \E e^{X^2/K^2}
 \weq \E \exp \bigg( \frac{s X^2}{2 \sigma_X^2} \bigg)
 \wle \frac{1}{\sqrt{1-s}}
 \weq 2.
\]
Hence $\sgnorm{X} \le K$ and we conclude that $\sig_X \ge \sqrt{3/8} \, \sgnorm{X}$.
\end{proof}

\section{Preliminaries}

\subsection{Rolle's theorem (no need to prove, this is \cite[Theorem 5.10]{Rudin_1976})}

\begin{lemma}
\label{the:DerivativeVanishes}
Let \( f \colon [a,b] \to \mathbb{R} \) be continuous on \( [a,b] \) and continuously differentiable on \( (a,b) \), and suppose that \( f(a) = f(b) \) for some \( a < b \). Then there exists a number
\( t \in (a,b) \) such that \( f'(t) = 0 \).
\end{lemma}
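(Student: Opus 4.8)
This is a version of Rolle's theorem, so the plan is to run the standard interior-extremum argument, using the $C^1$ hypothesis only where it is genuinely convenient. First I would invoke the Extreme Value Theorem: since $f$ is continuous on the compact interval $[a,b]$, it attains a maximum value $M$ and a minimum value $m$. If $M=m$ then $f$ is constant on $[a,b]$, hence $f'\equiv 0$ on $(a,b)$ and any $t$ works. Otherwise $M>m$, and since $f(a)=f(b)$ the two endpoints can realise at most one of these two values, so at least one of $M$, $m$ is attained at an interior point $c\in(a,b)$. At such a $c$, $f$ has a local extremum and is differentiable, so Fermat's interior-extremum principle gives $f'(c)=0$.

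An alternative route --- the one sketched in the commented-out draft preceding the statement --- avoids Fermat and uses the Fundamental Theorem of Calculus instead: since $f$ is $C^1$ on $(a,b)$ and continuous up to the endpoints, $\int_a^b f'(t)\,dt = f(b)-f(a) = 0$. If $f'>0$ throughout $(a,b)$, then fixing any nondegenerate compact subinterval $[c,d]\subset(a,b)$ gives $\int_a^b f' \ge \int_c^d f' \ge (d-c)\min_{[c,d]} f' > 0$ (the minimum is positive because $f'$ is continuous on $[c,d]$), contradicting the vanishing of the integral; symmetrically $f'<0$ throughout is impossible. Hence there are points $t_1,t_2\in(a,b)$ with $f'(t_1)\le 0\le f'(t_2)$, and continuity of $f'$ together with the Intermediate Value Theorem produces a zero of $f'$ between them.

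There is no genuine obstacle here: the fact is classical (Rudin, Theorem 5.10), and in the paper itself it would suffice to cite it. The only place that wants a line of care is the case split in the first argument --- checking that some extremum really falls inside the open interval --- or, in the second argument, observing that one needs only that $f'$ fails to be everywhere positive and fails to be everywhere negative, rather than an honest sign change at a single point.
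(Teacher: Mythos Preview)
Your proposal is correct. Your second route --- the integral argument via the Fundamental Theorem of Calculus followed by the Intermediate Value Theorem --- is essentially the paper's own proof: the paper phrases it through the decomposition $f' = f'_+ - f'_-$ and the equality $\int_a^b f'_+ = \int_a^b f'_-$, but the logical content (either $f'$ is identically zero, or it takes both signs, and then IVT applies) is the same as what you wrote.

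Your first route, via the Extreme Value Theorem and Fermat's interior-extremum principle, is the standard textbook argument and is genuinely different. It has the advantage of not needing the $C^1$ hypothesis at all --- mere differentiability on $(a,b)$ suffices --- whereas the paper's integral-and-IVT approach leans on continuity of $f'$ both for the FTC step and for the IVT step. Either is entirely adequate here; as you note, a citation to Rudin would also suffice.
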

\begin{proof}
Let us write $f' = f'_+ - f'_-$ with $f'_+ = \max\{f',0\}$ and $f'_- = \max\{-f',0\}$.
Because $f(a) = f(b)$, we see that $\int_a^b f'(t) \, dt = 0$, which implies that
\begin{equation}
 \label{eq:DerivativeVanishes1}
 \int_a^b f_+'(t) \, dt
 \weq \int_a^b f_-'(t) \, dt,
\end{equation}
where we note that $f_+'$ and $f_-'$ are continuous and nonnegative. We consider two cases:
\begin{enumerate}[(i)]
\item If both integrals in \eqref{eq:DerivativeVanishes1} are zero,
then $f_+'$ and $f_-'$ are identically zero on $(a,b)$, and so is $f'$.
\item If the integrals in \eqref{eq:DerivativeVanishes1} are nonzero,
then there exist points $t_1$ and $t_2$ in $(a,b)$ at which
$f'_-(t_1) > 0$ and $f'_+(t_2) > 0$.
Then $f'(t_1) < 0 < f'(t_2)$,
and the claim follows by the continuity of $f'$ and the intermediate value theorem.
\end{enumerate}
\end{proof}

\subsection{Orlicz norms}

Orlicz spaces\footnote{Named after \Wladyslaw Orlicz (1903--1990), PhD 1928 @ Lviv.}
are generalisations of $L^p$ spaces and allow to characterise random variables and
probability distributions with different rates of tail decay.

An \new{Orlicz  function} is a nondecreasing convex function
$\psi \colon [0,\infty) \to [0,\infty)$
such that $\psi(0)=0$ and $\lim_{x \to \infty} \psi(x) = \infty$.
The associated \new{Orlicz norm}%
\footnote{This is sometimes called the \new{Luxemburg norm} after
the author of a PhD thesis \cite{Luxemburg_1955} where
this norm was apparently first introduced.}
of a real-valued random variable $X$ is an extended real number defined by
\begin{equation}
 \label{eq:ONorm}
 \onorm{X}
 \weq \inf \big\{ K \in (0,\infty) \colon \E \psi(\abs{X}/K) \le 1 \big\},
\end{equation}
with the convention $\inf\emptyset = \infty$.
The \new{Orlicz space} $L_\psi(\pr)$ is the set of real-valued random variables
on a probability space $(\Omega, \cA, \pr)$ such that $\onorm{X} < \infty$.

\begin{lemma}
\label{the:OInf}
For all $K, L \in (0,\infty)$ such that $K < \onorm{X} \le L$,
\begin{equation}
 \label{eq:OInf}
 \E \psi(\abs{X}/L) \wle 1 
 \ < \
 \E \psi(\abs{X}/K).
\end{equation}
\end{lemma}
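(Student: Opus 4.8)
The plan is to argue straight from the definition \eqref{eq:ONorm}. Write $A = \{ K \in (0,\infty) : \E\psi(\abs{X}/K) \le 1 \}$, so that $\onorm{X} = \inf A$. The one elementary fact I need is that $K \mapsto \E\psi(\abs{X}/K)$ is nonincreasing on $(0,\infty)$: if $L \ge K$ then $\abs{X}/L \le \abs{X}/K$ pointwise, and since $\psi$ is nondecreasing we get $\psi(\abs{X}/L) \le \psi(\abs{X}/K)$ pointwise, hence the same for expectations. Consequently $A$ is an up-set, and in particular $(\onorm{X},\infty) \subseteq A$: any $t > \onorm{X} = \inf A$ exceeds some $K' \in A$, and then $t \in A$ because $A$ is an up-set.

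The second inequality is then immediate. Since $K \in (0,\infty)$ and $K < \onorm{X} = \inf A$, every element of $A$ strictly exceeds $K$, so $K \notin A$; by the definition of $A$ this says precisely that $\E\psi(\abs{X}/K) > 1$ (with the value $+\infty$ allowed, still consistent with the stated strict inequality).

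For the first inequality, pick a sequence $K_n \downarrow \onorm{X}$ with $K_n > \onorm{X}$, so that $K_n \in A$ by the remark above, and set $L_n = \max(K_n, L)$. Then $L_n \ge K_n$, so by monotonicity $\E\psi(\abs{X}/L_n) \le \E\psi(\abs{X}/K_n) \le 1$ for all $n$; moreover, since $\onorm{X} \le L$ the sequence $L_n$ decreases to $\max(\onorm{X}, L) = L$. Hence $\abs{X}/L_n \uparrow \abs{X}/L$ pointwise, and because $\psi$ is convex and real-valued on $[0,\infty)$ it is continuous on $(0,\infty)$, so $\psi(\abs{X}/L_n) \uparrow \psi(\abs{X}/L)$ pointwise — the point $\abs{X}/L$ being either positive, where continuity applies, or zero, where all terms vanish. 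The monotone convergence theorem then gives $\E\psi(\abs{X}/L) = \lim_n \E\psi(\abs{X}/L_n) \le 1$.

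The only genuine subtlety — and hence the step I would be most careful about — is that the infimum in \eqref{eq:ONorm} need not be attained, so one cannot simply conclude $L \in A$ from $\onorm{X} \le L$ (the boundary case $\onorm{X} = L$ is exactly where $L \notin A$ can happen). The truncation $L_n = \max(K_n, L)$ together with the monotone limit is precisely what absorbs this case; everything else is routine monotonicity and a single application of monotone convergence.
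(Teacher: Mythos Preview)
Your proof is correct and follows essentially the same approach as the paper: the strict inequality $\E\psi(\abs{X}/K) > 1$ comes directly from $K \notin A$, and the inequality $\E\psi(\abs{X}/L) \le 1$ is obtained by approximating from above and applying monotone convergence to handle the boundary case $L = \onorm{X}$. Your device $L_n = \max(K_n, L)$ neatly unifies the two sub-cases the paper treats separately, and your remark on continuity of $\psi$ makes the monotone-convergence step slightly more explicit than in the paper, but the substance is the same.
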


\begin{proof}
We will first verify the implications
\begin{align}
 \label{eq:OInf1}
 \E \psi(\abs{X}/K) > 1 &\quad \implies \quad K \le \onorm{X}, \\
 \label{eq:OInf2}
 \E \psi(\abs{X}/K) \le 1 &\quad \implies \quad K \ge \onorm{X}.
\end{align}
To do this, denote by $\cK_X$ the set on the right side of \eqref{eq:ONorm}.

(i) Let us verify \eqref{eq:OInf1}.
Assume that $K \in (0,\infty)$ satisfies $\E \psi(\abs{X}/K) > 1$.
In this case 
$\E \psi( \abs{X}/J ) \ge \E \psi( \abs{X}/K)  > 1$ for all $0 < J \le K$,
from which we conclude that $K$ is a lower bound of $\cK_X$.
Because $\onorm{X} = \inf \cK_X$
is the greatest lower bound of $\cK_X$, we conclude that $K \le \onorm{X}$.

(ii) Let us verify \eqref{eq:OInf2}.
Assume that $K \in (0,\infty)$ satisfies $\E \psi(\abs{X}/K) \le 1$.  Then $K \in \cK_X$.
Because 
$\onorm{X}$
is a lower bound of $\cK_X$,
it follows that $K \ge \onorm{X}$.

(iii) 
Let us verify the first inequality in \eqref{eq:OInf}.
Assume that $L > \onorm{X}$. Then $\E \psi(\abs{X}/L) \le 1$
because otherwise \eqref{eq:OInf1} would lead to a contradiction.
Hence the first inequality in \eqref{eq:OInf} is valid.
Assume next that $L = \onorm{X}$, and consider a sequence $L_n \downarrow L$.
Then
$\psi(\abs{X}/L_n) \uparrow \psi(\abs{X}/L)$.
The monotone continuity of expectation then implies that
$\E \psi(\abs{X}/L) = \lim_{n \to \infty} \E \psi(\abs{X}/L_n) \le 1$.
Hence the first inequality in \eqref{eq:OInf} is valid also for $L = \onorm{X}$.

(iv)
Let us verify the second inequality in \eqref{eq:OInf}.
If $K < \onorm{X}$, then evidently $\E \psi(\abs{X}/K) > 1$,
because else \eqref{eq:OInf2} would lead to a contradiction.
\end{proof}

\begin{proposition}
\label{the:OrliczNorm}
The Orlicz norm is a norm on the vector space $L_\psi(\pr)$.
\end{proposition}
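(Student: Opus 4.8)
The plan is to verify the three norm axioms --- positive definiteness, absolute homogeneity, and the triangle inequality --- directly from the definition \eqref{eq:ONorm}, using only that $\psi$ is an Orlicz function (nondecreasing, convex, with $\psi(0)=0$ and $\lim_{x\to\infty}\psi(x)=\infty$) together with the characterisation of $\onorm{\cdot}$ supplied by Lemma~\ref{the:OInf}. Note first that since $\psi\ge 0$, the quantity $\E\psi(\abs{X}/K)\in[0,\infty]$ is always well defined, and random variables agreeing almost surely are identified throughout. As a preliminary remark, once homogeneity and the triangle inequality are established they automatically show that $L_\psi(\pr)$ is closed under scalar multiples and sums (and contains $0$), hence is a vector space, so this needs no separate argument.

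For \emph{positive definiteness}, the implication $X=0$ a.s. $\Rightarrow \onorm{X}=0$ is immediate, since then $\psi(\abs{X}/K)=\psi(0)=0\le 1$ for every $K>0$. For the converse I would argue by contraposition: if $\pr(\abs{X}>\eps)>0$ for some $\eps>0$, use $\lim_{x\to\infty}\psi(x)=\infty$ to pick $x_0$ with $\psi(x_0)>1/\pr(\abs{X}>\eps)$; then for every $K\in(0,\eps/x_0]$, monotonicity of $\psi$ and restriction of the expectation to $\{\abs{X}>\eps\}$ give $\E\psi(\abs{X}/K)\ge\psi(x_0)\,\pr(\abs{X}>\eps)>1$, so no such $K$ lies in $\cK_X$ and hence $\onorm{X}\ge\eps/x_0>0$. \emph{Absolute homogeneity} is a change of variables: for $c\ne 0$, substituting $L=K/\abs{c}$ identifies $\{K>0:\E\psi(\abs{cX}/K)\le 1\}$ with $\{\abs{c}L:L>0,\ \E\psi(\abs{X}/L)\le 1\}$, so the infimum scales by $\abs{c}$; the case $c=0$ is trivial because $\onorm{0}=0$.

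The only substantive step is the \emph{triangle inequality}, and I expect it to be the main obstacle --- though it becomes routine once Lemma~\ref{the:OInf} is available. Let $X,Y\in L_\psi(\pr)$ and put $a=\onorm{X}$, $b=\onorm{Y}$, both finite. If $a=0$ then $X=0$ a.s. and the inequality reduces to $\onorm{Y}\le\onorm{Y}$, and symmetrically if $b=0$; so assume $a,b>0$. Applying the first inequality of \eqref{eq:OInf} with $L=a$ and $L=b$ gives $\E\psi(\abs{X}/a)\le 1$ and $\E\psi(\abs{Y}/b)\le 1$. Writing $\lambda=a/(a+b)\in(0,1)$, the bound $\abs{X+Y}/(a+b)\le \lambda\,\abs{X}/a+(1-\lambda)\,\abs{Y}/b$ combined with monotonicity and convexity of $\psi$ yields the pointwise estimate
\[
 \psi\!\left(\frac{\abs{X+Y}}{a+b}\right)
 \ \le\ \psi\!\left(\lambda\,\frac{\abs{X}}{a}+(1-\lambda)\,\frac{\abs{Y}}{b}\right)
 \ \le\ \lambda\,\psi\!\left(\frac{\abs{X}}{a}\right)+(1-\lambda)\,\psi\!\left(\frac{\abs{Y}}{b}\right).
\]
Taking expectations gives $\E\psi(\abs{X+Y}/(a+b))\le\lambda+(1-\lambda)=1$, so $a+b\in\cK_{X+Y}$ and therefore $\onorm{X+Y}\le a+b=\onorm{X}+\onorm{Y}$. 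Together with the two easy axioms this proves that $\onorm{\cdot}$ is a norm on $L_\psi(\pr)$.
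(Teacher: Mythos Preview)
Your proof is correct and follows essentially the same route as the paper: both verify the three norm axioms from the definition, leaning on Lemma~\ref{the:OInf} for the key step that $\E\psi(\abs{X}/\onorm{X})\le 1$, and both derive the triangle inequality from the pointwise convexity estimate $\psi\bigl(\lambda u+(1-\lambda)v\bigr)\le\lambda\psi(u)+(1-\lambda)\psi(v)$. The only cosmetic differences are that the paper argues positive definiteness directly via a Markov bound and a limit in $K$, whereas you do the equivalent contrapositive, and that the paper proves the triangle inequality with auxiliary $s>\onorm{X}$, $t>\onorm{Y}$ followed by a limit, whereas you plug in the norms themselves (which Lemma~\ref{the:OInf} permits) and thereby avoid the limiting step and the separate treatment of the degenerate cases $a=0$ or $b=0$.
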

\begin{proof}
(i) Positive definiteness. 
Let us verify that $\onorm{X} = 0$ if and only if $X=0$ almost surely.
The `only if' implication follows immediately by noting that if $X=0$ a.s., then
$\E \psi( \abs{X}/K) = \psi(0) = 0$ for all $K > 0$.
Assume next that $\onorm{X} = 0$.
Lemma~\ref{the:OInf} then implies that
$\E \psi( \abs{X}/K) \le 1$ for all $K > 0$.
Fix a number $t > 0$.
The monotonicity of $\psi$ and Markov's inequality then imply that
\[
 \pr(\abs{X} \ge t)
 \wle \pr(\psi(\abs{X}/K) \ge \psi(t/K))
 \wle \frac{1}{\psi(t/K)} \E \psi(\abs{X}/K)
 \wle \frac{1}{\psi(t/K)}
\]
for all large enough $K > 0$ such that $\psi(t/K) > 0$.
By letting $K \to \infty$, it follows that $\pr(\abs{X} \ge t) = 0$.
Because this is true for all $t > 0$, we find by
the union bound that
\[
 \pr( \abs{X} > 0 )
 \weq \pr \bigg( \bigcup_{n=1}^\infty \{ \abs{X} \ge 1/n \} \bigg)
 \wle \sum_{n=1}^\infty \pr\{ \abs{X} \ge 1/n\}
 \weq 0.
\]
Therefore, $X = 0$ a.s.

(ii) Homogeneity.  Let us verify that $\onorm{c X} = \abs{c} \onorm{X}$ for all $c \in \R$.
By (i), the claim is true for $c=0$. Let us now fix $c \ne 0$.
Lemma~\ref{the:OInf} tells us that 
\[
 \E \psi \bigg( \frac{\abs{c X}}{\abs{c} L} \bigg) 
 \wle 1 
 \ < \
 \E \psi \bigg( \frac{\abs{c X}}{\abs{c} K} \bigg)
\]
for all $K, L \in (0,\infty)$ such that $K < \onorm{X} \le L$.
Equivalently,
\[
 \E \psi \bigg( \frac{\abs{c X}}{L} \bigg) 
 \wle 1 
 \ < \
 \E \psi \bigg( \frac{\abs{c X}}{K} \bigg) 
\]
for all $K, L \in (0,\infty)$ such that $K < c \onorm{X} \le L$.
From this we conclude that $\onorm{cX} = c \onorm{X}$.

(iii) Triangle inequality.
Let us prove $\onorm{X+Y} \le \onorm{X} + \onorm{Y}$.
Assume that $\onorm{X}, \onorm{Y} < \infty$ (otherwise the claim is trivial).
Fix a number $u > \onorm{X} + \onorm{Y}$, and select some numbers $s > \onorm{X}$ and $t >  \onorm{Y}$
such that $u = s+t$. Note that
\[
 \frac{\abs{X+Y}}{u}
 \weq \frac{\abs{X+Y}}{s+t}
 \wle \frac{\abs{X}}{s+t} + \frac{\abs{Y}}{s+t}
 \wle \frac{s}{s+t} \frac{\abs{X}}{s} +\frac{t}{s+t} \frac{\abs{Y}}{t}.
\]
Because $\psi$ is nondecreasing and convex, it follows that
\[
 \psi \left( \frac{\abs{X+Y}}{u} \right)
 \wle \frac{s}{s+t} \psi
 \left( \frac{\abs{X}}{s} \right) +\frac{t}{s+t} \psi \left( \frac{\abs{Y}}{t} \right).
\]
Because $s > \onorm{X}$ and $t > \onorm{Y}$, we see (Lemma~\ref{the:OInf}) that
$\E \psi ( \abs{X}/s ) \le 1$ and $\E \psi ( \abs{Y}/t ) \le 1$.
By taking expectations in the above inequality, we now find that
\[
 \E \psi \left( \frac{\abs{X+Y}}{u} \right)
 \wle 1.
\]
Therefore, $\onorm{X+Y} \le u$. Because this is true for every $u > \onorm{X} + \onorm{Y}$, we conclude by letting $u \downarrow \onorm{X} + \onorm{Y}$ that
$\onorm{X+Y} \le \onorm{X} + \onorm{Y}$.

(iv) The triangle inequality (iii) implies that $L_\psi(\pr)$ is a vector space.
The parts (i)--(iii) confirm that $\onorm{X}$ is a norm on $L_\psi(\pr)$.
\end{proof}

\subsection{sub-Gaussian parameter}

These simple results are more or less well known.

\begin{proposition}
\label{the:SGParameterEquality}
If $\sig_X < \infty$, then $\E e^{sX} \le e^{\sig_X^2 s^2/2}$ for all $s \in \R$.
\end{proposition}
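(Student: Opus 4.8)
The plan is to show that the infimum defining $\sig_X$ is actually attained, so that the defining bound holds verbatim with $\sigma$ replaced by $\sig_X$ itself; the stated conclusion then follows immediately under the standing convention that $X$ is centered (so that $X - \E X = X$). Concretely, the argument is a short limiting argument exploiting the continuity of $\sigma \mapsto e^{\sigma^2 s^2/2}$.

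First I would record a structural fact about the admissible set $A = \{\sigma \ge 0 : \E e^{s(X-\E X)} \le e^{\sigma^2 s^2/2} \text{ for all } s \in \R\}$, namely that it is upward closed: if $\sigma \in A$ and $\sigma' > \sigma$, then $\sigma^2 s^2 \le (\sigma')^2 s^2$ for every $s$, whence $\E e^{s(X-\E X)} \le e^{\sigma^2 s^2/2} \le e^{(\sigma')^2 s^2/2}$, so $\sigma' \in A$. Thus $A$ is an interval with left endpoint $\sig_X$, and the hypothesis $\sig_X < \infty$ guarantees $A \ne \emptyset$; consequently every $\sigma > \sig_X$ lies in $A$. (In passing, this also shows the moment generating function of $X - \E X$ is finite on all of $\R$, so the expectations appearing below are finite.)

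Next I would fix $s \in \R$, choose any sequence $\sigma_n \downarrow \sig_X$, and note that $\E e^{s(X-\E X)} \le e^{\sigma_n^2 s^2/2}$ for every $n$ by the previous step. Letting $n \to \infty$, the right-hand side tends to $e^{\sig_X^2 s^2/2}$ by continuity, while the left-hand side does not depend on $n$; hence $\E e^{s(X-\E X)} \le e^{\sig_X^2 s^2/2}$. Since $s$ was arbitrary and $\E X = 0$, this reads $\E e^{sX} \le e^{\sig_X^2 s^2/2}$ for all $s$, which is the claim.

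I do not anticipate a genuine obstacle here. The only point that needs care is the upward-closedness in the first step: it is exactly what licenses the phrase ``every $\sigma > \sig_X$ works'', since the infimum of an arbitrary subset of $[0,\infty)$ need not have this property. Granting that, the remainder is a one-line limit.
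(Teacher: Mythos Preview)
Your proof is correct and follows essentially the same approach as the paper's: take a sequence $\sigma_n \downarrow \sigma_X$ in the admissible set and pass to the limit using continuity of $\sigma \mapsto e^{\sigma^2 s^2/2}$. The upward-closedness observation, while valid, is slightly more than needed---the existence of such a sequence already follows directly from the definition of infimum, which is all the paper invokes.
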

\begin{proof}
Note that $\sig_X = \inf S$ where
\[
 S
 \weq \{ \sig \in [0,\infty) \colon \E e^{sX} \le e^{\sig^2 s^2/2} \text{ for all $s \in \R$} \}.
\]
Because $\sig_X < \infty$, we see that $S$ is nonempty, and there exists a decreasing sequence
$\sig_n \downarrow \sig_X$ with $\sig_n \in S$.
Fix a number $s \in \R$. Then $\E e^{sX} \le e^{\sig_n^2 s^2/2}$ for all $n$.
By continuity, it follows that $\E e^{sX} \le e^{\sig_X^2 s^2/2}$.

\end{proof}

\begin{proposition}
\label{the:SGParameterScale}
$\sigma_{a X + b} = a \sigma_X$ for all $a \ge 0$ and $b \in \R$.
\end{proposition}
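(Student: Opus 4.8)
The plan is to argue directly from the definition of the sub-Gaussian parameter as the infimum of a set of admissible variance proxies, and to show that this set for \(aX+b\) is exactly \(a\) times the corresponding set for \(X\).

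First I would dispose of the trivial case \(a=0\). Then \(aX+b=b\) is deterministic, so \((aX+b)-\E(aX+b)=0\) and \(\E e^{s((aX+b)-\E(aX+b))}=1\le e^{\sigma^2 s^2/2}\) for every \(\sigma\ge0\) and every \(s\in\R\); hence the admissible set is all of \([0,\infty)\) and \(\sigma_{aX+b}=0=a\sigma_X\) (with the convention \(0\cdot\infty=0\) should \(\sigma_X\) be infinite). For \(a>0\), I would first note \(\E(aX+b)=a\E X+b\), so that \((aX+b)-\E(aX+b)=a(X-\E X)\). Fixing \(\sigma\ge0\) and using the substitution \(t=as\), which is a bijection of \(\R\) onto itself since \(a>0\), the condition ``\(\E e^{s((aX+b)-\E(aX+b))}\le e^{\sigma^2 s^2/2}\) for all \(s\in\R\)'' becomes equivalent to ``\(\E e^{t(X-\E X)}\le e^{(\sigma/a)^2 t^2/2}\) for all \(t\in\R\)'', i.e.\ to \(\sigma/a\) being admissible for \(X\). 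Writing \(S_Y=\{\sigma\ge0:\E e^{s(Y-\E Y)}\le e^{\sigma^2 s^2/2}\text{ for all }s\in\R\}\), this says \(S_{aX+b}=a\,S_X\). Taking infima and using \(\inf(aS_X)=a\inf S_X\) — valid whether \(S_X\) is nonempty or empty, i.e.\ whether \(\sigma_X\) is finite or \(+\infty\) — yields \(\sigma_{aX+b}=a\sigma_X\).

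I do not anticipate a genuine obstacle: the argument is a one-line change of variables once the centering \((aX+b)-\E(aX+b)=a(X-\E X)\) is recorded. The only points requiring a little care are the degenerate case \(a=0\), the fact that the scaling substitution is a bijection of \(\R\) only when \(a>0\), and the remark that the identity \(\inf(aS)=a\inf S\) continues to hold when \(S=\emptyset\), so that the formula is correct even in the non-sub-Gaussian case \(\sigma_X=\infty\).
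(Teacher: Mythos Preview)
Your proposal is correct and follows essentially the same approach as the paper: both argue directly from the definition, dispose of the case \(a=0\) trivially, and for \(a>0\) show via the substitution \(t=as\) that the admissible set for \(aX+b\) equals \(a\) times the admissible set for \(X\), then take infima. Your version is slightly more careful in noting the convention \(0\cdot\infty=0\) and the behaviour when the admissible set is empty, but the argument is otherwise identical.
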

\begin{proof}
Let $X$ have a finite mean.
Because $(aX+b) - \E (aX+b) = a X - \E (aX)$,
we see that from \eqref{eq:SGParameter} that $\sigma_{a X + b} = \sigma_{a X}$.
Next we note that $\sigma_{aX} = \inf \Sigma_{aX}$, where
\[
 \Sigma_{aX}
 \weq \Big\{ \sigma \in [0,\infty): \E e^{a s (X - \E(X)} \le e^{\sigma^2 s^2/2} \ \text{for all $s \in \R$} \Big\}.
\]

(i) Assume that $a > 0$. Then we see that $\sigma \in \Sigma_{aX}$ if and only if $\sigma/a \in \Sigma_X$.
Hence
\begin{align*}
 \sigma_{a X}
 \weq \inf \Sigma_{aX}
 \weq \inf\left\{ \sigma \colon \sigma/a \in \Sigma_X \right\}
 \weq \inf (a \Sigma_X).
\end{align*}
Because $\sigma \mapsto a \sigma$ is an increasing bijection,
we see that $\inf (a \Sigma_X) = a \inf \Sigma_X = a \sigma_X$.

(ii) Assume that $a=0$.  Then $\sigma_{a X} = \sigma_0$ for the degenerate random variable $X=0$.
In this case $\E e^{s (X - \E X)} = 1$ for all $s \in \R$. We conclude that $\sigma_{a X} = 0 = 0 \sigma_X$.
\end{proof}

\begin{proposition}
\label{the:SGParameterZero}
$\sigma_X = 0$ if and only if $X = \E X$ almost surely.
\end{proposition}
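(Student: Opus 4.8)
The plan is to prove both implications directly from the definition of $\sigma_X$, reusing the limiting argument from Proposition~\ref{the:SGParameterEquality}. Throughout I would write $Y = X - \E X$, so that $\E Y = 0$ and $\sigma_X$ is the infimum of those $\sigma \ge 0$ for which $\E e^{sY} \le e^{\sigma^2 s^2/2}$ holds for all $s \in \R$.

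The ``if'' direction is immediate: if $X = \E X$ almost surely, then $Y = 0$ almost surely, so $\E e^{sY} = 1 = e^{0 \cdot s^2/2}$ for every $s \in \R$, which places $\sigma = 0$ in the defining set and forces $\sigma_X = 0$.

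For the ``only if'' direction, assume $\sigma_X = 0$. First I would upgrade this to the pointwise bound $\E e^{sY} \le 1$ for all $s \in \R$: pick $\sigma_n \downarrow 0$ with $\E e^{sY} \le e^{\sigma_n^2 s^2/2}$ for every $n$ and let $n \to \infty$, exactly as in the proof of Proposition~\ref{the:SGParameterEquality}. Next I would turn this two-sided exponential bound into almost-sure constancy of $Y$: since $e^{s\abs{Y}} \le e^{sY} + e^{-sY}$, taking expectations gives $\E e^{s\abs{Y}} \le 2$ for every $s > 0$; then Markov's inequality yields $\pr(\abs{Y} \ge \eps) \le 2 e^{-s\eps}$ for all $\eps, s > 0$, and letting $s \to \infty$ shows $\pr(\abs{Y} \ge \eps) = 0$ for every $\eps > 0$. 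A union bound over $\eps = 1/n$ then gives $\pr(Y \ne 0) = 0$, i.e. $X = \E X$ almost surely.

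I do not expect a genuine obstacle: the statement is elementary once the definition is unwound, and implicitly it only relies on $\E\abs{X} < \infty$, which is automatic since $\sigma_X < \infty$. The one place warranting a moment's care is the final passage from the moment generating function bound to almost-sure constancy. A tempting shortcut --- differentiating $s \mapsto \E e^{sY}$ twice at $s = 0$ to deduce $\E Y^2 = 0$ --- would require justifying differentiation under the expectation, so the Markov and union-bound argument above is preferable because it is fully self-contained.
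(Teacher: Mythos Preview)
Your proposal is correct and follows essentially the same route as the paper: both directions are argued exactly as you describe, with the only cosmetic difference being that the paper bounds $\E e^{s\abs{Y}}$ via the indicator decomposition $\E e^{sY}1(Y\ge 0)+\E e^{-sY}1(Y<0)\le 2$ rather than your pointwise inequality $e^{s\abs{Y}}\le e^{sY}+e^{-sY}$.
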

\begin{proof}
Assume that $\sigma_X = 0$. Let $Y = X - \E X$.  Then $\sigma_Y = \sigma_X = 0$ by
Proposition~\ref{the:SGParameterScale}.
Fix $\lambda \in \R$.
Then $\E e^{\lambda Y} \le e^{\sigma^2 \lambda^2/2}$ for all $\sigma>0$. By letting $\sigma \downarrow 0$, we see that $\E e^{\lambda Y} \le 1$ for all $\lambda \in \R$.  Hence
\[
 \E e^{\lambda \abs{Y}}
 \weq \E e^{\lambda Y} 1(Y \ge 0) + \E e^{- \lambda Y} 1(Y < 0)
 \wle 2
\]
for all $\lambda$. Then for all $\lambda, t > 0$,
\[
 \pr( \abs{Y} > t )
 \weq \pr( e^{\lambda \abs{Y}} > e^{\lambda t})
 \wle e^{-\lambda t} \E e^{\lambda \abs{Y}}
 \wle 2 e^{-\lambda t}.
\]
By letting $\lambda \uparrow \infty$, we conclude that $\pr( \abs{Y} > t ) = 0$ for all $t > 0$. 
Hence $\pr(Y \ne 0) = \lim_{n \to \infty} \pr(\abs{Y} > 1/n) = 0$, and we conclude that $Y=0$ almost surely.

Conversely, if $X = \E X$ almost surely, the we immediately see from \eqref{eq:SGParameter}
that $\sigma_X = 0$.
\end{proof}

\subsection{sub-Gaussian norm}

\begin{lemma}
\label{the:SGBall}
For any number $K > 0$, $\sgnorm{X} \le K$ if and only if $\E e^{X^2/K^2} \le 2$.
\end{lemma}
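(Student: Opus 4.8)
The plan is to unwind the definition $\sgnorm{X} = \inf S$, where $S = \{L > 0 \colon \E e^{X^2/L^2} \le 2\}$ (with the convention $\inf\emptyset = \infty$), and to exploit the monotonicity of the map $L \mapsto \E e^{X^2/L^2}$. One direction is free: if $\E e^{X^2/K^2} \le 2$, then $K \in S$, so $\sgnorm{X} = \inf S \le K$. All the work is in the converse.

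For the converse I would start from the hypothesis $\sgnorm{X} \le K$, note that it forces $S \ne \emptyset$ (otherwise $\sgnorm{X} = \infty$), and record the key monotonicity fact: for $0 < L_1 < L_2$ we have $X^2/L_2^2 \le X^2/L_1^2$ pointwise, hence $\E e^{X^2/L_2^2} \le \E e^{X^2/L_1^2}$; in particular, any $L' > L$ with $L \in S$ also lies in $S$. Then I would pick the sequence $L_n = K + 1/n$: since $L_n > K \ge \inf S$, each $L_n$ fails to be a lower bound of $S$, so there is some $L \in S$ with $L < L_n$, and monotonicity gives $\E e^{X^2/L_n^2} \le \E e^{X^2/L^2} \le 2$.

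To finish, I would use $L_n \downarrow K$, which makes $X^2/L_n^2 \uparrow X^2/K^2$ pointwise, hence $e^{X^2/L_n^2} \uparrow e^{X^2/K^2}$; the monotone convergence theorem then yields $\E e^{X^2/K^2} = \lim_{n\to\infty} \E e^{X^2/L_n^2} \le 2$, which is the claim. The only genuinely delicate point is the boundary case $\sgnorm{X} = K$, where $K$ itself need not belong to $S$ in advance and no element of $S$ sits strictly below $K$; this is precisely where the monotone-convergence limit does the work. Everything else is routine — essentially the standard argument that an Orlicz (Luxemburg) ball is closed.
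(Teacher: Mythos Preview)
Your argument is correct and is essentially the same as the paper's: the paper invokes Lemma~\ref{the:OInf}, whose proof (part~(iii)) uses exactly the same monotone-convergence step along a sequence $L_n \downarrow L$ that you spell out directly for $\psi_2$. The only difference is organizational --- the paper factors the argument through a general Orlicz-norm lemma, while you inline it.
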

\begin{proof}
Fix a number $K > 0$.
(i) Assume that $\sgnorm{X} \le K$.
Then by Lemma~\ref{the:OInf}, it follows that $\E e^{X^2/K^2} \le 2$.
(ii) Assume next that $\sgnorm{X} > K$.
Then by Lemma~\ref{the:OInf}, $\E e^{X^2/K^2} > 2$.
\end{proof}

\fi

\end{document}